    \newtheorem{theorem}{Theorem}[section]
\newtheorem{lemma}{Lemma}[section]
    \newtheorem{remark}{Remark}[section]
\DeclareMathOperator{\sign}{sgn}
\newcommand\widecheck[1]{%
\savestack{\tmpbox}{\stretchto{%
  \scaleto{%
    \scalerel*[\widthof{\ensuremath{#1}}]{\kern-.6pt\bigwedge\kern-.6pt}%
    {\rule[-\textheight/2]{1ex}{\textheight}}%WIDTH-LIMITED BIG WEDGE
  }{\textheight}% 
}{0.5ex}}%
\stackon[1pt]{#1}{\scalebox{-1}{\tmpbox}}%
}
\newcommand{\RNum}[1]{\uppercase\expandafter{\romannumeral #1\relax}}
\DeclarePairedDelimiter{\norm}{\lVert}{\rVert}
\NewDocumentCommand{\normL}{ s O{} m }{%
  \IfBooleanTF{#1}{\norm*{#3}}{\norm[#2]{#3}}_{L_2(\Omega)}%
}
\newcommand{\vect}[1]{\bold{#1}}
\title{ Discontinuous Galerkin methods for stochastic Maxwell equations with multiplicative noise}
\author{	Jiawei Sun\footnote{Department of Mathematics, The Ohio State University,
		Columbus, OH 43210, USA. E-mail: sun.2261@buckeyemail.osu.edu.} \and
		Chi-Wang Shu\footnote{Division of Applied Mathematics, Brown University, Providence, RI 02912, USA. 
E-Mail: chi-wang\_shu@brown.edu. The work of this author is partially supported by NSF grant DMS-2010107
and AFOSR grant FA9550-20-1-0055.} 
		\and Yulong Xing\footnote{Department of Mathematics, The Ohio State University,
		Columbus, OH 43210, USA. E-mail: xing.205@osu.edu. The work of this author is partially supported by the NSF grant DMS-1753581.}}
\date{}
\begin{document}
\maketitle
\begin{abstract}
In this paper we propose and analyze finite element discontinuous Galerkin methods for the one- and two-dimensional stochastic Maxwell equations with multiplicative noise. 
The discrete energy law of the semi-discrete DG methods were studied. Optimal error estimate of the semi-discrete method is obtained for the one-dimensional case, and the two-dimensional case on both rectangular meshes and triangular meshes under certain mesh assumptions. Strong Taylor 2.0 scheme is used as the temporal discretization. Both one- and two-dimensional numerical results are presented to validate the theoretical analysis results.
\end{abstract}

\smallskip
	\textbf{Key words:} Discontinuous Galerkin methods, Stochastic Maxwell equations, Multiplicative noise, Energy law, Optimal error estimate.

\section{Introduction}
Stochastic Maxwell equations are commonly used to model microscopic origins of randomness in electromagnetism. The concept of stochastic Maxwell equations was firstly introduced by Rytov et al. in \cite{Rytov} to describe the fluctuations of an electromagnetic field. In \cite{ORD}, Ord et al. studied the Mark Kac random walk model and modified it into the Max field equations in 1+1 dimensions. Such model describes most of the telegraph equations, and the author's modification of the Kac model constructed a strong connection between the telegraph and Maxwell equations.  
In \cite{HSY 2010} Horsin et al. applied an abstract approach and a constructive approach by generalizing the Hilbert uniqueness method to analyze the approximate controllability of the stochastic Maxwell equations. Furthermore in \cite{BL2010} the deterministic and stochastic integrodifferential equations in Hilbert spaces were studied and the well-posedness for the Cauchy problem of the integrodifferential equation were analyzed. Such results were motivated from mathematical modeling of electromagnetics fields in complex random media.

%\YX{These are all numerical study based on uncertainty quantification. Any theoretical results? See for example the review article in \cite{LZ2019}}
Numerical methods are often used to solve Maxwell equations with various forms of stochasticity.
In \cite{BS2015}, Benner et al. studied the time-harmonic Maxwell's equations with some 
uncertainty in material parameters. They compared stochastic collocation and Monte Carlo simulation, and computed a reduced model in order to 
lower the computational cost. In \cite{J2009} Jung considered the wave and Maxwell equations with fluctuations by random change in media parameters. The author used polynomial chaos Galerkin projections to develop the evolution of probability distribution function. Later in \cite{JKMN2014} Jung et al. studied two-dimensional transverse magnetic Maxwell equations with multiple random interfaces. They applied the polynomial chaos projection method, and computed the stochastic and deterministic part separately. Furthermore, stochastic collocation methods for metamaterial Maxwell's equations are studied by Li et al. in \cite{LFL2018}. They considered the equations with random coefficient and random initial conditions. They also developed regularity analysis for stochastic metamaterial Maxwell's equations.

In the recent years, there have been many studies on various numerical methods for the stochastic Maxwell equations with either additive or multiplicative noises. 
In \cite{HJ2017}, Hong et al. studied the the stochastic Maxwell equations driven by a multiplicative noise, which are given as follows:
\begin{eqnarray}\label{multiHJmul}
  \epsilon d\vect{E}=\nabla\times\vect{H}-\lambda \vect{H}\circ dW,~ 
  \mu d\vect{H}=-\nabla\times\vect{E}+\lambda\vect{E}\circ dW, 
\end{eqnarray}
where $\bf{H}$ represents the magnetic field, $\bf{E}$ stands for the electric field, $\lambda$ is the scale of the noise, $\circ$ denotes the stochastic integral in Stratonovich sense, and $W$ is a space time mixed Wiener process.  
They studied the multi-symplectic structure and the energy conservation 
law for \eqref{multiHJmul}, and developed a fully discrete numerical method which 
conserves both multi-symplecticity and energy in the discrete level. 
Furthermore, Cohen et al. in \cite{CCHS} analyzed the general form of stochastic 
Maxwell equations with multiplicative noise, 
%\begin{eqnarray}\label{multistomax1}
%  d\mathbb{U}=\Big(M\mathbb{U}+F(\mathbb{U})\Big)dt+B(\mathbb{U})dW_t,
%\end{eqnarray}
%where $\mathbb{U}=(\vect{E}^T,\vect{H}^T)^T$, and 
%\[M=\left(\begin{array}{cc}
%0&\nabla\times\\
%-\nabla\times&0\end{array}\right),\]
%and $W_t$ is a space time mixed Wiener process.  They 
and constructed an exponential integrator 
%for \eqref{multistomax1} which reads
%\begin{eqnarray}\label{multiEx}
%  \mathbb{U}_{k+1}=S(\Delta t)\mathbb{U}_k+S(\Delta t)F(\mathbb{U}_k)\Delta t+S(\Delta t)
%  G(\mathbb{U}_k)\Delta W_k,
%\end{eqnarray}
%where $S(t)=\exp(tM)$, and showd that the scheme \eqref{multiEx} 
which has a general mean square convergence order 0.5 in time, and first order temporal rate under some assumptions.
%when $F$ and $G$ do not depend on $\mathbb{U}$, the scheme has first temporal convergence order. 
Chen et al. in \cite{CHJ2019} applied a semi-implicit scheme for the model under a general setting and showed that the proposed method has the mean-square order of 0.5 in time.
%\begin{eqnarray}\label{multistomax2}
%  d\mathbb{U}=\Big(M\mathbb{U}+F(t,\mathbb{U})\Big)dt+B(t,\mathbb{U})dW_t,
%\end{eqnarray}
%which in the form  of 
%\begin{eqnarray}\label{multisemi}
%  \mathbb{U}_{k+1}=\mathbb{U}_k+\Delta tM\mathbb{U}_{k+1}+\Delta tF(t_{k+1},\mathbb{U}_{k+1})
%  +G(t_k,\mathbb{U}_k)\Delta W_k.
%\end{eqnarray}
%The authors showed that the scheme \eqref{multisemi} has the mean-square order of 0.5 in time. 
In \cite{LZ2019}, Zhang et al. presented a nice review article to summarize numerical methods for different kinds of stochastic Maxwell equations with both additive noise and multiplicative noise, including \eqref{multiHJmul} studied in \cite{HJ2017}, and the model 
\begin{eqnarray}\label{multieqin3}
  \begin{split}
  \epsilon 
d\vect{E}-\nabla\times\vect{H}dt=-\vect{J}_e(t,\vect{x},\vect{E},\vect{H})dt-\vect{J}_e^r
  (t,\vect{x},\vect{E},\vect{H})dW, \\
  \mu 
  d\vect{H}+\nabla\times\vect{E}dt=-\vect{J}_m(t,\vect{x},\vect{E},\vect{H})dt-\vect{J}_m^r
  t,\vect{x},\vect{E},\vect{H})dW,
  \end{split}
\end{eqnarray}
which was studied in \cite{CHJ2019}. 
The properties of stochastic Maxwell equations are also provided in that paper. 
Hong et al. in \cite{HH2021} studied the stochastic wave equation
% in the form
% \begin{equation}
%  \begin{cases}
%du=vdt, ~~~~~~~~~~~~~~~~~~~~~~~~~~~~~~~(x,t)\in (a,b)\times (0,T]\\
%dv=\Delta udt-f(u)dt+g(u)dW_t.~~(x,t)\in (a,b)\times (0,T]
%\end{cases}
%\end{equation}
and developed numerical schemes that preserve the averaged energy evolution 
law. Both the compact finite difference method and the interior penalty discontinuous Galerkin (DG) finite element 
methods were proposed. Finite element approximations of a class of nonlinear stochastic wave equations with multiplicative noise were recently investigated in \cite{LWX2021}.

The high order DG finite element methods are considered in this paper. 
The DG method is a class of finite element methods that uses discontinuous piecewise polynomials as the basis functions, and was first introduced by Reed and Hill in \cite{RH1973} to solve linear transport equation. In the early 1990s, Cockburn et al. studied the extension of DG methods for hyperbolic conservation laws in a series of papers \cite{CHS1990,  CKS2000,  CLS1989, CS1989}. DG methods adopt many advantages from both finite element and finite volume methods, including hp-adaptivity flexibility, efficient parallel implementation, the ability of handling complicated boundary conditions, etc., making them a popular choice for numerical methods of conservation laws. 
  
In \cite{CCLX2017}, Cheng et al. studied DG methods for the one-dimensional (1D) deterministic two-way wave equations,
%  \begin{eqnarray}\label{multicheng2017}
%    \begin{cases}
%      E_t=B_x-S_1,\\
%      B_t=E_x-S_2,
%    \end{cases}
%  \end{eqnarray} 
%  where $E, B$ are unknowns and $S_1, S_2$ are source terms. 
and investigated a family of $L^2$ stable high order DG methods defined through a general form of numerical fluxes. A systematic study of stability, error estimates, and dispersion analysis was carried out. In a recent paper \cite{SX2021}, Sun and Xing extended the analysis on optimal error estimate to multi-dimensional wave equations. For the DG methods with generalized numerical fluxes, one key idea to the optimal error estimate was by constructing a global projection on unstructured meshes, which will also be useful for the analysis in this paper. 
Recently, DG methods have been extended for stochastic partial differential equations. Li et al. investigated DG methods \cite{YL 2020} for stochastic conservation laws with multiplicative noise
% \[du+f(u)_xdt=g(x,t,u)dW_t, \]
and provided optimal error estimate for the semilinear equations.

In this paper, we present the DG methods for both one- and two-dimensional (2D) stochastic Maxwell equations with multiplicative noise. This is an extension of our previous work \cite{SXS2021}, where we studied multi-symplectic DG methods for stochastic Maxwell equations with additive noise. We showed in \cite{SXS2021} that the proposed methods satisfy the discrete form of the stochastic energy linear growth property and preserve the multi-symplectic structure on the discrete level. Optimal error estimate of the semi-discrete DG method was also analyzed in \cite{SXS2021}. Another related work was studied by Chen in \cite{C}, where a symplectic full discretization of multi-dimensional stochastic Maxwell equations with additive noise is provided. DG methods were used for spatial discretization and midpoint method was used for temporal discretization. A first order convergence in time and 1.5th order convergence in space was discussed in \cite{C}.  
In this work, we plan to apply DG spatial discretization with generalized numerical fluxes to stochastic Maxwell equations with multiplicative noise. We will present the discrete energy growth property of the numerical solutions obtained from our semi-discrete DG scheme, following the energy law of the exact solutions.
With the help of a global projection constructed in \cite{SX2021}, we present optimal error estimate of the semi-discrete DG methods for stochastic Maxwell equations with multiplicative noise for both one- and two-dimensional cases. In the two-dimensional case, we study both rectangular meshes and triangular meshes. Strong Taylor 2.0 temporal discretization is combined with the semi-discrete DG method to derive a fully discrete method for numerical implementation. Both one- and two-dimensional numerical results are presented to validate the theoretical analysis results.
  
The structure of this paper is as follows. Section \ref{multi1D} describes the DG method for one-dimensional stochastic Maxwell equations with multiplicative noises. Energy growth and the optimal error estimate of the proposed method are provided.  Section \ref{multi2D} studies the DG scheme for two-dimensional stochastic Maxwell equations. Both rectangular meshes and triangular meshes are considered. The corresponding discrete energy law and optimal error estimate are also studied for both cases. The temporal discretization is briefly discussed in Section \ref{multitime}. Section \ref{multinum} presents the numerical results to validate the theoretical results. Conclusion remarks are provided in Section \ref{multisec6}.
  
%Throughout this paper, $L^2$ norm is denoted by $\norm{\cdot}$, and $C$ represents a generic 
%positive constant independent of the spatial and temporal step size $h$ and $\Delta t$, 
%which can take different values in different cases. Furthermore, $W_t$ represents 
%the standard Brownian motion starting from 0. 

\section{One-dimensional stochastic Maxwell equations with multiplicative noise}\label{multi1D}
\setcounter{equation}{0}\setcounter{figure}{0}\setcounter{table}{0}

In this section, we consider one-dimensional stochastic Maxwell equations with multiplicative noise:
  \begin{equation}\label{multi1}
  \begin{cases}
dv=-u_xdt+f(x,t,u,v)dW_t, &\\
du=-v_xdt+g(x,t,u,v)dW_t,
\end{cases}
\end{equation}
where $W_t$ is a standard one-dimensional Brownian motion on a given probability space $(\Omega,\mathcal{F},\mathbb{P})$
and $f,~g$ are functions that satisfy the following Lipschitz continuous and linear growth assumptions:
\begin{eqnarray}
  && | f(x,t,u_1,v)-f(x,t,u_2,v)|+ | g(x,t,u_1,v)-g(x,t,u_2,v)|\le 
   C|u_1-u_2|, \label{multiassum1}\\
  && |f(x,t,u,v_1)-f(x,t,u,v_2)|+|g(x,t,u,v_1)-g(x,t,u,v_2)|\le C|v_1-v_2|,\label{multiassum2}\\
 & & |f(x,t,u,v)|+|g(x,t,u,v)|\le C(1+|u|+|v|). \label{multiassum3}
\end{eqnarray}

The stochastic Maxwell equations with multiplicative noise \eqref{multi1} satisfy the following energy law. 
A similar result was discussed in \cite{HH2021} for stochastic wave equations.
% \JS{We next present the energy law for \eqref{multi1}. There is a similar result discussed in \cite{HH2021} for stochastic wave equations}
  \begin{theorem}[\bf{Continuous energy law}]\label{multi1denergypdelevel}
Let $u$ and $v$ be the solutions to the model \eqref{multi1} under periodic boundary condition. For any $t$, the global stochastic energy satisfies the following energy law
\begin{eqnarray}\label{multithm2.1}
     \mathbb{E}\Big(\int_I u^2(x,t)+v^2(x,t)dx\Big) = \int_I u^2(x,0)+v^2(x,0)dx  +\int_0^t \mathbb{E}(\norm{f}^2+\norm{g}^2) 
     d\tau.
  \end{eqnarray}
  \end{theorem}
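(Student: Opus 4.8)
The plan is to apply Itô's formula to the squared quantity $u^2+v^2$, integrate the resulting identity over the spatial domain $I$ and over the time interval $[0,t]$, and then take expectation so that the stochastic (martingale) contribution drops out. Since the driving process $W_t$ is a scalar Brownian motion, the stochastic calculus is one-dimensional in the noise variable, and Itô's formula can be applied to each component of the system directly.

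First I would treat each equation of \eqref{multi1} as an Itô SDE and compute the differentials of the squares. For the first equation $dv=-u_x\,dt+f\,dW_t$, Itô's formula gives
\begin{equation}
d(v^2)=2v\,dv+(dv)^2=-2v u_x\,dt+2vf\,dW_t+f^2\,dt,
\end{equation}
and similarly for $du=-v_x\,dt+g\,dW_t$,
\begin{equation}
d(u^2)=-2u v_x\,dt+2ug\,dW_t+g^2\,dt.
\end{equation}
Adding these and using the product rule $vu_x+uv_x=(uv)_x$ yields
\begin{equation}
d(u^2+v^2)=-2(uv)_x\,dt+(f^2+g^2)\,dt+2(vf+ug)\,dW_t.
\end{equation}

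Next I would integrate this identity in $x$ over $I$. The deterministic flux term $\int_I (uv)_x\,dx$ reduces to the boundary values of $uv$, which vanish under the periodic boundary condition. Integrating in time from $0$ to $t$ then produces
\begin{equation}
\int_I (u^2+v^2)(x,t)\,dx=\int_I (u^2+v^2)(x,0)\,dx+\int_0^t\!\int_I (f^2+g^2)\,dx\,d\tau+2\int_0^t\!\int_I (vf+ug)\,dx\,dW_\tau.
\end{equation}

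Finally I would take expectation on both sides. The last term is an Itô integral against $W_\tau$; its expectation is zero provided its integrand is square-integrable, which follows from the linear growth assumption \eqref{multiassum3} together with the a priori moment bounds on $u$ and $v$ guaranteed by well-posedness of \eqref{multi1}. Recognizing that $\int_I f^2\,dx=\norm{f}^2$ and $\int_I g^2\,dx=\norm{g}^2$ then gives \eqref{multithm2.1}. The main technical point I expect is the rigorous justification of applying Itô's formula to the functional $\int_I(u^2+v^2)\,dx$ in this infinite-dimensional, function-valued setting, together with the verification that the martingale term is a genuine martingale of zero mean; both rest on the Lipschitz and growth conditions \eqref{multiassum1}--\eqref{multiassum3}, which supply the integrability needed to discard the stochastic integral in expectation.
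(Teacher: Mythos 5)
Your proposal is correct and follows essentially the same route as the paper's proof: Itô's formula on the squared solutions, cancellation of the drift term $\int_I (uv_x+vu_x)\,dx$ via integration by parts with periodicity, zero expectation of the Itô integral, and identification of the quadratic-variation contribution with $\int_0^t \mathbb{E}(\norm{f}^2+\norm{g}^2)\,d\tau$. The only cosmetic difference is that you use the formal rule $(dv)^2=f^2\,dt$ where the paper tracks the bracket $d\langle v,v\rangle_t$ explicitly and invokes the Itô isometry, which is the same computation.
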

%  \YX{Is there any reference to this result? Add it, and may skip the proof if it is available elsewhere.} 
  \begin{proof}
      By utilizing the It\^{o}'s lemma and equations \eqref{multi1}, we have 
  \begin{align}
          &d\Big(\int_I u^2(x,t)+v^2(x,t)dx\Big)=\int_I \big(2udu+ d\langle u,u\rangle_t+2v dv +d\langle v,v\rangle_t\big) dx \notag\\
    &~~~~~~~=-\int_I (2uv_x+2v u_x)dtdx+2\int_I (gu+fv) dW_tdx+\int_I d\langle u,u\rangle_t+d\langle v,v\rangle_t dx \notag\\
    &~~~~~~~=2\int_I (gu+fv) dW_tdx+\int_I d\langle u,u\rangle_t+d\langle v,v\rangle_t dx,
        \label{multi1denergyeq1}
  \end{align}
where the last equality follows from the integration by parts and periodic boundary condition.
Integrating over time leads to
 \begin{align}
          \int_I u^2(x,t)+v^2(x,t)dx&=\int_I u^2(x,0)+v^2(x,0)dx   \notag\\
    & + 2\int_0^t \int_I (gu+fv)dx dW_t +\int_I \langle u,u\rangle_t+\langle v,v\rangle_t dx.	\label{multi1denergyeq2}
  \end{align}  
  
Integrating the second equation of \eqref{multi1} over $t$ leads to
  \begin{eqnarray}\label{quadrau}
    u(x,t)=u(x,0)+\int_0^t -v_xd\tau+\int_0^t g(x,\tau,u,v)dW_\tau,
  \end{eqnarray}
therefore, we have
  \begin{eqnarray*}
    \int_I\langle u,u\rangle_tdx= \int_I \Big\langle \int_0^t g(x,\tau,u,v)dW_\tau, \int_0^t g(x,\tau,u,v)dW_\tau\Big\rangle_t dx.
  \end{eqnarray*}
By It\^{o} isometry, we obtain
  \begin{eqnarray*}
    \mathbb{E} \int_I \langle u,u\rangle_t dx=\int_0^t \mathbb{E}\norm{g}^2d\tau.
  \end{eqnarray*}
For the same reason, it follows that 
    \begin{eqnarray*}
    \mathbb{E} \int_I\langle \eta,\eta\rangle_t dx = \int_0^t \mathbb{E}\norm{f}^2d\tau.
  \end{eqnarray*}
%  \YX{Need some more explanations, like why $d\langle u,u\rangle_t$ becomes $\int_I g^2 dxdt$.}
Note that the process $\int_0^t \int_I (gu+fv)dx dW_t$ is an It\^{o} integral, thus it has zero expectation.
Taking the expectation of \eqref{multi1denergyeq2} yields the continuous energy law \eqref{multithm2.1}.
  \end{proof}

%\YX{Talk more about regularity? See how this was discussed in \cite{LZ2019} or \cite{CHJ2019}.}
In \cite{CHJ2019}, Chen et al. established the regularity properties of the solution of stochastic Maxwell equations with multiplicative noise \eqref{multieqin3}. Let $\mathbb{M}$ be the differential operator defined as 
\[\mathbb{M}=\left(\begin{array}{cc} 0&\nabla\times \\ -\nabla\times&0 \end{array}\right).\]
It was shown in \cite{CHJ2019} that, under certain assumptions, the solutions are uniformly bounded in the following way: 
\begin{eqnarray}\label{multiregu1}
  \norm{\mathbb{U}(t)}_{L^p(\Omega;\mathcal{D}(M^l))}\le C\left(1+\norm{\mathbb{U}_0}_{L^p(\Omega;\mathcal{D}(M^l))}\right),
\end{eqnarray}
for any given integer $l$, $p\ge 2$, $t\in[0,T]$, where $\mathbb{U}=(\vect{E},\vect{H})^T$ and $\mathcal{D}(\mathbb{M}^l)$ stands for the domain of $\mathbb{M}^l$, the $l$-th power of the operator $\mathbb{M}$.
Furthermore, the H\"older continuity of the solution holds in $\mathcal{D}(\mathbb{M}^{l-1})$ norm in the expectation sense: 
\begin{eqnarray}\label{multiregu2}
  \mathbb{E}\norm{\mathbb{U}(t)-\mathbb{U}(s)}^p_{\mathcal{D}(\mathbb{M}^{l-1})}\le 
  |t-s|^{\frac p2}.
\end{eqnarray}
%We will assume $H^{k+1}$ regularity in this paper  to study the ``best'' spatial convergence rate of the proposed method, which has also been observed on some numerical examples in Section \ref{multinum}. 

The one-dimensional computational domain $I$ is partitioned into subintervals $I_j=[x_{j-1/2},x_{j+1/2}]$, $j=1,2,\cdots,N$. Let us denote 
$x_j=(x_{j-1/2}+x_{j+1/2})/2$ to be the center of each subinterval. Denote $h_j=x_{j+1/2}-x_{j-1/2}$ to be the mesh size of each subinterval and $h=\max_j h_j$ to be the maximum mesh size. We further assume that the ratio $h/h_j$ is bounded over all $j$ during mesh refinement. 
The piecewise polynomial solution and test function space $V_h^k$ is defined as
\[V_h^k=\{w_h:w_h|_{I_j}\in P^k(I_j),~~ j=1,2,\cdots,N\},\]
where $P^k(I_j)$ stands for the space of polynomials of degree up to $k$ on the cell $I_j$.
Since the function in $V_h^k$ can be discontinuous at cell interface, we use $w_h^+$ and $w_h^-$ to represent the function limit from the right and left respectively. 
We denote the average and jump of the functions at the cell interfaces by $\{w_h\} = \left(w_h^++w_h^-\right)/2$ and $[w_h] = w_h^+ - w_h^-$.
Throughout this paper, we denote $L^2$ norm by $\norm{\cdot}$, and $C$ represents a generic 
positive constant which is independent of the spatial and temporal step size $h$, $\Delta t$, and
may take different values in different cases.

The DG scheme for the one-dimensional model \eqref{multi1} takes the following formulation: 
for $x\in I,~ (\omega,t)\in \Omega\times [0,T]$, find $v_h(\omega,x,t), u_h(\omega,x,t) \in V^k_h$, such that for any test functions $ \varphi, \widetilde{\varphi}\in V_h^k$, it holds that 
\begin{eqnarray}
  \int_{I_j}dv_h\varphi(x)dx=\Big(\int_{I_j}u_h\varphi_xdx 
  -(\widehat{u}_h\varphi^-)_{j+\frac12}+(\widehat{u}_h\varphi^+)_{j-\frac12}\Big)dt
  +\int_{I_j}f\varphi dW_tdx,   \label{multiphi}\\
    \int_{I_j}du_h\widetilde{\varphi}(x)dx=\Big(\int_{I_j}v_h\widetilde{\varphi}_xdx 
  -(\widehat{v}_h\widetilde{\varphi}^-)_{j+\frac12}+(\widehat{v}_h\widetilde{\varphi}^+)_{j-\frac12}\Big)dt
  +\int_{I_j}g\widetilde{\varphi} dW_tdx,   \label{multiphii}
  \end{eqnarray}
where $\widehat{u}_h, \widehat{v}_h$ are the numerical fluxes defined on the cell interfaces. In this paper, we follow the study in \cite{CCLX2017,SX2021} and consider the following generalized numerical fluxes
\begin{equation}\label{multiflux1}
\widehat{u}_h=\{u_h\}+\alpha[u_h]-\beta_1[v_h], \qquad \widehat{v}_h=\{v_h\}-\alpha[v_h]-\beta_2[u_h],
\end{equation}
for some $\alpha\in\mathbb{R}$ and $\beta_1,~\beta_2\geq 0$. A subgroup of such numerical fluxes, named ``$\alpha\beta$'' fluxes, were considered in \cite{CCLX2017} for one-dimensional deterministic two-way wave equations, and optimal error estimate were investigated based on a specially constructed projection operator.
 DG method with more general numerical fluxes were studied in \cite{SX2021} for the one- and multi-dimensional deterministic wave equations. To provide an optimal error estimate, the key ingredient was to construct an optimal global projection on one-dimensional meshes or multi-dimensional unstructured meshes, which will also be utilized in this paper. Note that similar DG method was studied in \cite{C} for the stochastic Maxwell equation with additive noise, using a specific choice of numerical fluxes (upwind flux) and slightly different treatment of the noise term, and suboptimal error estimate was investigated.  

Next, we start by showing the following semi-discrete energy law satisfied by the numerical solutions of the proposed DG methods.  
\begin{theorem}[\bf{Semi-discrete energy law}]\label{multithm1}
  Let $v_h$ and $u_h$ be the numerical solutions obtained in \eqref{multiphi} and 
  \eqref{multiphii} with $\beta_{1},\beta_2\geq 0$, and let $\mathcal{P}(f)$ and $\mathcal{P}(g)$ be the $L^2$ projections of $f$ and $g$ onto $V_h^k$, then we have
  \[\mathbb{E}\Big(\norm{u_h(x,t)}^2+\norm{v_h(x,t)}^2\Big)\le\norm{u_h(x,0)}^2+\norm{v_h(x,0)}^2+  \int_0^t\mathbb{E}\Big(\norm{\mathcal{P}(f)}^2+\norm{\mathcal{P}(g)}^2\Big)ds. \]
Moreover, the equality holds when $\beta_1=\beta_2=0$ in the numerical fluxes \eqref{multiflux1}.
\end{theorem}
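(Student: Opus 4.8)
The plan is to mirror the proof of the continuous energy law (Theorem \ref{multi1denergypdelevel}) at the discrete level, replacing integration by parts with the summation-by-parts structure of the DG spatial operator. First I would apply It\^o's formula to $v_h^2$ and $u_h^2$, so that $d\norm{v_h}^2 = \sum_j \int_{I_j}\big(2v_h\,dv_h + d\langle v_h,v_h\rangle_t\big)\,dx$ and analogously for $u_h$. The natural choice of test functions is $\varphi = v_h$ in \eqref{multiphi} and $\widetilde\varphi = u_h$ in \eqref{multiphii}, which produces precisely the $\int_{I_j} v_h\,dv_h\,dx$ and $\int_{I_j} u_h\,du_h\,dx$ terms. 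Adding the two identities and summing over all cells then splits $d\big(\norm{u_h}^2+\norm{v_h}^2\big)$ into three pieces: a deterministic spatial term multiplying $dt$, an It\^o integral driven by $dW_t$, and the quadratic variation contributions.

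The first technical step is to show the spatial term is non-positive. Using the elementwise identity $\int_{I_j}\big(u_h(v_h)_x + v_h(u_h)_x\big)\,dx = (u_h^-v_h^-)_{j+1/2} - (u_h^+v_h^+)_{j-1/2}$ and re-summing the boundary contributions interface by interface, each interface contributes $(\widehat u_h - \{u_h\})[v_h] + (\widehat v_h - \{v_h\})[u_h]$, where I have used $u_h^-v_h^- - u_h^+v_h^+ = -\{u_h\}[v_h] - [u_h]\{v_h\}$. Substituting the generalized fluxes \eqref{multiflux1} gives $\widehat u_h - \{u_h\} = \alpha[u_h] - \beta_1[v_h]$ and $\widehat v_h - \{v_h\} = -\alpha[v_h] - \beta_2[u_h]$; the crucial cancellation is that the two $\alpha[u_h][v_h]$ contributions cancel, leaving $-\beta_1[v_h]^2 - \beta_2[u_h]^2 \le 0$ at every interface. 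This is where the sign condition $\beta_1,\beta_2\ge 0$ enters: it turns the identity into an inequality and forces equality precisely when $\beta_1=\beta_2=0$.

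The step I expect to be the crux is identifying the quadratic variation, since this is where the projected norms $\norm{\mathcal{P}(f)}^2$ and $\norm{\mathcal{P}(g)}^2$ must emerge. The key observation is that the martingale part of $u_h$ is exactly the $L^2$ projection of the noise coefficient: writing $u_h$ in a local basis and inverting the mass matrix, the variational identity $\int_{I_j}(du_h)\widetilde\varphi\,dx = \cdots\,dt + \int_{I_j} g\widetilde\varphi\,dx\,dW_t$ shows that the diffusion coefficient of $u_h$ is $\mathcal{P}(g)$, because matching $g\,dW_t$ against all test functions in $V_h^k$ is precisely the defining property of the $L^2$ projection. Hence $\int_I d\langle u_h,u_h\rangle_t\,dx = \norm{\mathcal{P}(g)}^2\,dt$ by the It\^o isometry, and likewise $\int_I d\langle v_h,v_h\rangle_t\,dx = \norm{\mathcal{P}(f)}^2\,dt$.

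Finally I would integrate in time from $0$ to $t$ and take expectations. The $dW_t$ term is a genuine It\^o integral and therefore has zero expectation, exactly as in the continuous proof. Collecting the remaining pieces yields
\[
\mathbb{E}\big(\norm{u_h(t)}^2+\norm{v_h(t)}^2\big) = \norm{u_h(0)}^2+\norm{v_h(0)}^2 - 2\,\mathbb{E}\int_0^t \sum_{j}\big(\beta_1[v_h]^2+\beta_2[u_h]^2\big)_{j+1/2}\,ds + \int_0^t \mathbb{E}\big(\norm{\mathcal{P}(f)}^2+\norm{\mathcal{P}(g)}^2\big)\,ds,
\]
and dropping the manifestly non-positive dissipation term gives the claimed inequality, with equality when $\beta_1=\beta_2=0$.
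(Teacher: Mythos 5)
Your proposal is correct and follows essentially the same path as the paper's proof: the same test-function choice $\varphi=v_h$, $\widetilde\varphi=u_h$, the same cancellation of the $\alpha$ terms leaving the dissipation $-\beta_1[v_h]^2-\beta_2[u_h]^2$, the same identification of the martingale part of $(u_h,v_h)$ with $(\mathcal{P}(g),\mathcal{P}(f))$ (which the paper derives via the Legendre basis expansion, while you argue it directly from the defining property of the $L^2$ projection), and the same final step of integrating in time and killing the It\^{o} integral by taking expectations.
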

\begin{proof}
Taking the test functions $\varphi=v_h$ in \eqref{multiphi} and $\widetilde{\varphi}=u_h$ in \eqref{multiphii}, and adding the resulting two equations together, we have   
  \begin{eqnarray}\label{multiqqq}
     \int_{I_j}(du_h)u_h+(dv_h)v_h dx&=&
\int_{I_j}gu_h+fv_hdxdW_t
+ \Big(\int_{I_j}u_h(v_h)_xdx+v_h(u_h)_xdx\Big)dt \notag \\
&+&\Big( \big((\{u_h\}+\alpha[u_h]-\beta_1[v_h])v_h^+\big)_{j-\frac12}
  -\big((\{u_h\}+\alpha[u_h]-\beta_1[v_h])v_h^-\big)_{j+\frac12}\Big)dt
\notag \\
&+&\Big( \big((\{v_h\}-\alpha[v_h]-\beta_2[u_h])u_h^+\big)_{j-\frac12}
  -\big((\{v_h\}-\alpha[v_h]-\beta_2[u_h])u_h^-\big)_{j+\frac12}\Big)dt\notag \\
 &=&\int_{I_j}gu_h+fv_hdxdW_t+(\Theta_{j-\frac12}-\Theta_{j+\frac12})dt-(\beta_1[v_h]^2_{j+\frac12}+\beta_2[u_h]^2_{j+\frac12})dt\notag\\
 &\le&\int_{I_j}gu_h+fv_hdxdW_t + (\Theta_{j-\frac12}-\Theta_{j+\frac12})dt,
  \end{eqnarray}
where \[\Theta=\left(\frac12+\alpha\right)v^-_hu_h^++\left(\frac12-\alpha\right)u_h^-v_h^+.\]
Note that the equality in \eqref{multiqqq} holds when $\beta_1=\beta_2=0$ in the numerical fluxes \eqref{multiflux1}.

By It\^{o}'s formula, we have
 \begin{equation}\label{multiIto}
    d(u_h)^2=2u_hdu_h+d\langle u_h,u_h\rangle_t, \qquad d(v_h)^2=2v_hdv_h+d\langle v_h,v_n\rangle_t.
 \end{equation}
 It follows from \eqref{multiphi} that 
 \begin{align}
  \int_{I_j}v_h(x,t)\varphi(x)dx&=\int_{I_j}v_h(x,0)\varphi(x)dx 	  \label{multiphi1} \\ 
  &
  +\int_0^t\Big(\int_{I_j}u_h\varphi_xdx -(\widehat{u}_h\varphi^-)_{j+\frac12}+(\widehat{u}_h\varphi^+)_{j-\frac12}\Big)d\tau
  +\int_0^t\int_{I_j}f\varphi dW_{\tau}dx, \notag 
 \end{align}
 therefore, for any continuous semimartingale $Y$, 
 \begin{equation} \label{multiITOO}
\int_{I_j}\langle v_h,Y\rangle_t\varphi(x)dx=\Big\langle\int_{I_j}v_h\varphi dx,Y\Big\rangle_t
=\Big\langle\int_0^t\int_{I_j}f\varphi dW_{\tau}dx,Y\Big\rangle_t=\Big\langle\int_0^t\int_{I_j}\mathcal{P}(f)\varphi dW_{\tau}dx,Y\Big\rangle_t.
 \end{equation}
Let us represent the numerical solutions $v_h$ in the cell $I_j$ as
\[v_h(\omega,x,t)=\sum^k_{l=0}v_j^l(\omega,t)\phi_j^l(x),\]
where $\{\phi_j^l\}$ represents the set of orthogonal Legendre basis of $V_h^k$ over cell $I_j$.
It can be shown that \eqref{multiphi1} and \eqref{multiITOO} lead to
 \begin{eqnarray}\label{multiquadr}
   \begin{split}
     \int_{I_j}\langle v_h,v_h\rangle_tdx=\int_{I_j}\Big\langle v_h,\sum_{l=0}^k
     v_j^l\phi_j^l\Big\rangle_tdx=\sum_{l=0}^k\int_{I_j}\langle 
     v_h,v_j^l\rangle_t\phi_j^ldx=\sum_{l=0}^k
     \Big\langle\int_0^t\int_{I_j}\mathcal{P}(f)\phi_j^ldW_{\tau}dx,v_j^l\Big\rangle_t
     \\
     =\sum_{l=0}^k\int_{I_j}\Big\langle\int_0^t\mathcal{P}(f)dW_{\tau},v_j^l\phi_j^l\Big\rangle_tdx
     =\int_{I_j}\Big\langle\int_0^t\mathcal{P}(f)dW_{\tau},v_h\Big\rangle_tdx,
   \end{split}
 \end{eqnarray}
Let us denote $\mathcal{P}(f)=\sum_{l=0}^k ({P_f})_j^l\phi_j^l$. Repeating the same process as in \eqref{multiquadr}, we can obtain 
 \begin{eqnarray}\label{multiquadr2}
   \int_{I_j}\Big\langle\int_0^t\mathcal{P}(f)dW_{\tau},v_h\Big\rangle_tdx=\int_{I_j}
   \Big\langle\int_0^t\mathcal{P}(f)dW_{\tau}, \int_0^t\mathcal{P}(f)dW_{\tau}\Big\rangle_tdx,
 \end{eqnarray}
which leads to 
 \begin{eqnarray}\label{multimaeta}
   \int_{I_j}\langle v_h,v_h\rangle_tdx=\int_{I_j}
   \Big\langle\int_0^t\mathcal{P}(f)dW_{\tau}, \int_0^t\mathcal{P}(f)dW_{\tau}\Big\rangle_tdx
   =:m_a(v_h,f).
 \end{eqnarray}
 Similarly, we have 
  \begin{eqnarray}\label{multimau}
   \int_{I_j}\langle u_h,u_h\rangle_tdx=\int_{I_j}
   \Big\langle\int_0^t\mathcal{P}(g)dW_{\tau}, \int_0^t\mathcal{P}(g)dW_{\tau}\Big\rangle_tdx
   =:m_a(u_h,g).
 \end{eqnarray}
 
% In \eqref{multiphi} choose $\varphi=dv_h$, we have 
% \[\int_{I_j} (dv_h)^2dx=\int_{I_j} fdv_h dxdW_t=\int_{I_j}(\mathcal{P}f)dv_hdxdW_t,\]
% where $\mathcal{P}f$ is the $L^2$ projection of $f$ onto $V_h^k$.
% Again,  in \eqref{multiphi} choose $\varphi=(\mathcal{P}f)dW_t$, we have 
% \[\int_{I_j}(\mathcal{P}f)dv_hdxdW_t=\int_{I_j}(\mathcal{P}f)^2dxdt,\]
%which leads to 
%\begin{eqnarray}\label{multidetah}
%  \int_{I_j} (dv_h)^2dx=\int_{I_j}(\mathcal{P}f)^2dxdt.
%\end{eqnarray}
%
% 
% Similarly, we have 
% \begin{eqnarray}\label{multiduh}
%\int_{I_j} (du_h)^2dx=\int_{I_j}(\mathcal{P}g)^2dxdt.
% \end{eqnarray}
Integrating \eqref{multiqqq} over $t$, summing up over all cells $I_j$ and utilizing the results in \eqref{multiIto}, \eqref{multimaeta}-\eqref{multimau},  we have 
\begin{eqnarray}
  \norm{u_h(x,t)}^2+\norm{v_h(x,t)}^2&\le&  \norm{u_h(x,0)}^2+\norm{v_h(x,0)}^2 \notag \\
  &+&m_a(v_h;f)+m_a(u_h;g) +\int_0^t\int_I gu_h+fv_h dxdW_{\tau}, \label{multienergy1}
\end{eqnarray}
with periodic boundary conditions.
Note that the process $\int_0^t\int_I gu_h+fv_h dxdW_{\tau}$ is an It\^{o} 
integral, thus it has zero expectation.
Taking expectation of equation \eqref{multienergy1}, and applying It\^{o} isometry onto $m_a(v_h;f)$ and $m_a(u_h;g)$, we have 
\begin{eqnarray*}
  \mathbb{E}\Big(\norm{u_h(x,t)}^2+\norm{v_h(x,t)}^2\Big)
  \le \norm{u_h(x,0)}^2+\norm{v_h(x,0)}^2+
  \int_0^t\mathbb{E}\Big(\norm{\mathcal{P}(f)}^2+\norm{\mathcal{P}(g)}^2\Big)d\tau,
\end{eqnarray*}
which finishes the proof.
%From an application of Gronwall's inequality we finally obtain 
%\[ \mathbb{E}\Big(\norm{u_h(x,t)}^2+\norm{v_h(x,t)}^2\Big)\le (C+ \norm{u_h(x,0)}^2+\norm{v_h(x,0)}^2)
%e^{Ct}.\]
\end{proof}

Next, we will provide an optimal error estimate analysis of the proposed semi-discrete DG method.
We start by defining a pair of global projection operators which will be used in the error 
estimate analysis: on any cell $I_j$ and for any pair of smooth functions $(q(x),~r(x))$, 
define $\mathcal{P}^{\alpha,\beta_1}$ and $\mathcal{P}^{-\alpha,\beta_2}$ as 
\begin{eqnarray}
  &&\int_{I_j}(\mathcal{P}^{\alpha,\beta_1} q-q(x))w(x)dx=0,\qquad \forall w(x)\in 
  P^{k-1}(I_j),\quad \forall j,\label{multip1}\\
   &&\int_{I_j}(\mathcal{P}^{-\alpha,\beta_2} r-r(x))w(x)dx=0,\qquad \forall w(x)\in 
  P^{k-1}(I_j), \quad \forall j,\\
  &&(\{\mathcal{P}^{\alpha,\beta_1}   q\}+\alpha[\mathcal{P}^{\alpha,\beta_1}   
  q]-\beta_1[\mathcal{P}^{\alpha,\beta_1}r])_{j+\frac12}=q(x_{j+\frac12}), \quad \forall j,\\
  &&(\{\mathcal{P}^{-\alpha,\beta_2}   r\}-\alpha[\mathcal{P}^{-\alpha,\beta_2}   
  r]-\beta_2[\mathcal{P}^{-\alpha,\beta_2}q])_{j+\frac12}=r(x_{j+\frac12}),\quad \forall j.\label{multip2}
\end{eqnarray}
This set of global projections was also introduced by Sun and Xing in \cite{SX2021}, 
and the following property on the projection error is studied in 
\cite[Lemma 2.1]{SX2021} and will be useful in the error estimate analysis.

\begin{lemma}[\bf{Projection error}] \label{multiprojectionerror}
  If $\alpha^2+\beta_1\beta_2\ne0$, the projections $\mathcal{P}^{\alpha,\beta_1},~\mathcal{P}^{-\alpha,\beta_2}$ in \eqref{multip1}-\eqref{multip2} are well defined.
   Furthermore, let $q,~r\in H^{k+1}$ be smooth functions, then there exists some constant $C$ such that 
  \[\norm{\mathcal{P}^{\alpha,\beta_1} q-q}^2+\norm{\mathcal{P}^{-\alpha,\beta_2} r-r}^2\le Ch^{2k+2}\left(\norm{q}^2_{H^{k+1}}+\norm{r}^2_{H^{k+1}}\right).\]
\end{lemma}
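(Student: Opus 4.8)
The plan is to reduce the coupled projection to a finite linear system for the top-degree Legendre coefficients on each cell, and then to analyze that system through a cell-independent transfer matrix whose spectrum is controlled by $\alpha^2+\beta_1\beta_2$. First I would fix an $L^2$-orthonormal Legendre basis $\{\phi_j^l\}_{l=0}^k$ on each cell $I_j$ and write $P_q:=\mathcal{P}^{\alpha,\beta_1}q$, $P_r:=\mathcal{P}^{-\alpha,\beta_2}r$. The two $L^2$-type conditions \eqref{multip1}--\eqref{multip2} say precisely that $P_q-q$ and $P_r-r$ are $L^2$-orthogonal to $P^{k-1}(I_j)$; hence, writing $\pi_h$ for the standard cellwise $L^2$ projection onto $V_h^k$, the differences $\xi:=P_q-\pi_h q$ and $\zeta:=P_r-\pi_h r$ retain only the degree-$k$ mode, i.e. $\xi|_{I_j}=\xi_j\phi_j^k$, $\zeta|_{I_j}=\zeta_j\phi_j^k$. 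Subtracting the analogous interface expressions for $\pi_h q,\pi_h r$, the remaining two interface conditions become
\begin{equation*}
\{\xi\}+\alpha[\xi]-\beta_1[\zeta]=R^1_{j+\frac12},\qquad \{\zeta\}-\alpha[\zeta]-\beta_2[\xi]=R^2_{j+\frac12},
\end{equation*}
where $R^1,R^2$ collect the pointwise interface residuals of the standard $L^2$ projection of $q$ and $r$ (each being a one-sided trace of $\pi_h q-q$ or $\pi_h r-r$, since $q,r$ are continuous).

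Using the explicit endpoint values $\phi_j^k(x_{j+1/2}^-)=\gamma_j$ and $\phi_j^k(x_{j-1/2}^+)=(-1)^k\gamma_j$ with $\gamma_j\sim h_j^{-1/2}$, each relation couples only the coefficients on $I_j$ and $I_{j+1}$, giving a two-term block recurrence $(-1)^k\gamma_{j+1}B^+\mathbf{w}_{j+1}+\gamma_j B^-\mathbf{w}_j=\mathbf{R}_{j+1/2}$ for $\mathbf{w}_j=(\xi_j,\zeta_j)^\top$, with
\begin{equation*}
B^+=\begin{pmatrix}\tfrac12+\alpha & -\beta_1\\ -\beta_2 & \tfrac12-\alpha\end{pmatrix},\qquad B^-=\begin{pmatrix}\tfrac12-\alpha & \beta_1\\ \beta_2 & \tfrac12+\alpha\end{pmatrix}.
\end{equation*}
The algebraic observation I would exploit is that $B^-=\operatorname{adj}(B^+)$, so the cell-to-cell transfer matrix $(B^+)^{-1}B^-$ is independent of $j$ and has eigenvalues $\mu_\pm=(\tfrac12\pm\sqrt{s})/(\tfrac12\mp\sqrt{s})$ with $s:=\alpha^2+\beta_1\beta_2$. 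Because the mesh weights $\gamma_j$ telescope around the periodic domain (their product over a loop is $1$), the loop transfer matrix reduces exactly to $M^N$ with $M=(-1)^{k+1}(B^+)^{-1}B^-$, so well-posedness amounts to $M^N-I$ being nonsingular. Since $\beta_1,\beta_2\ge0$ forces $s\ge0$, a short computation shows $|\mu_\pm|=1$ precisely when $s=0$; thus for $\alpha^2+\beta_1\beta_2\ne0$ the amplification factors have modulus away from $1$, no resonance $(-1)^{(k+1)N}\mu_\pm^{N}=1$ can occur, and the homogeneous system ($q=r=0$) admits only the trivial solution. As the system is square, injectivity yields existence and uniqueness.

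For the error bound I would invert the same system quantitatively. On a quasi-uniform mesh the standard pointwise (trace) estimate for the $L^2$ projection gives $\sum_j|R^i_{j+1/2}|^2\le C h^{2k+1}(\norm{q}_{H^{k+1}}^2+\norm{r}_{H^{k+1}}^2)$. The uniform bound $\|(M^N-I)^{-1}\|\le C$ obtained above, propagated through the transfer recurrence, yields $\sum_j(\xi^-_{j+1/2})^2+\sum_j(\zeta^-_{j+1/2})^2\le C\sum_j|\mathbf{R}_{j+1/2}|^2$. Converting back via $\xi_j=\gamma_j^{-1}\xi^-_{j+1/2}\sim h^{1/2}\xi^-_{j+1/2}$ and $\norm{\xi}^2=\sum_j\xi_j^2$ (orthonormality) recovers one power of $h$, so $\norm{\xi}^2+\norm{\zeta}^2\le C h^{2k+2}(\norm{q}_{H^{k+1}}^2+\norm{r}_{H^{k+1}}^2)$. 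Finally $\norm{P_q-q}\le\norm{\xi}+\norm{\pi_h q-q}$ together with the classical optimal estimate $\norm{\pi_h q-q}\le C h^{k+1}\norm{q}_{H^{k+1}}$ (and similarly for $r$) delivers the claimed bound.

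The step I expect to be the main obstacle is establishing the uniform-in-$h$ (and $N$) conditioning of the coupled global interface system: one must verify that the per-cell transfer matrix is genuinely cell-independent so that the nonuniform weights $\gamma_j$ telescope, track the parity signs $(-1)^k$ and the $h^{\pm1/2}$ trace scalings without losing a power of $h$, and confirm that the modulus-one locus of the amplification factors is exactly $\{s=0\}$. The degenerate value $s=\tfrac14$ (where $\det B^\pm=0$ and one factor vanishes) requires propagating the recurrence in the non-degenerate direction, but since $|\mu_\pm|\ne1$ still holds this does not affect either the well-posedness or the rate.
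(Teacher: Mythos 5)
For reference, the paper itself does not prove this lemma: it is quoted from \cite[Lemma 2.1]{SX2021}, so your argument has to stand as a self-contained reconstruction, and its skeleton is sound and matches the mechanism underlying the cited analysis. Subtracting the cellwise $L^2$ projection does isolate the degree-$k$ Legendre modes; the interface conditions do reduce to the two-term block recurrence with exactly your $B^{\pm}$; the identity $B^{-}=\operatorname{adj}(B^{+})$ gives a cell-independent transfer matrix with eigenvalues $\mu_{\pm}=(\tfrac12\pm\sqrt{s})/(\tfrac12\mp\sqrt{s})$, $s=\alpha^{2}+\beta_{1}\beta_{2}$; and your $h^{\pm1/2}$ bookkeeping ($h^{2k+1}$ for the interface residuals of the $L^2$ projection, one extra power of $h$ from converting traces back to coefficients) is exactly what produces $h^{2k+2}$. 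The observation that $\beta_{1},\beta_{2}\ge0$ forces $s\ge0$ is essential rather than cosmetic: for $s<0$ the factors $\mu_{\pm}$ are unimodular and genuine resonances occur for particular $N$.

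However, two steps are gaps, and they are the load-bearing ones. First, the quantitative claim that $\norm{(M^{N}-I)^{-1}}\le C$ ``propagated through the transfer recurrence'' yields $\sum_{j}\big((\xi^{-}_{j+1/2})^{2}+(\zeta^{-}_{j+1/2})^{2}\big)\le C\sum_{j}|\mathbf{R}_{j+1/2}|^{2}$ does not follow as stated: the particular solution of the forced recurrence involves the powers $M^{m}$, and $\norm{M^{m}}\sim|\mu_{+}|^{m}$ is exponentially large in $N$, so naive propagation loses everything. One must exploit $\mu_{+}\mu_{-}=1$: diagonalize $M$ (its eigenvectors are fixed, independent of $h$ and $N$), solve the $|\mu|<1$ eigencomponent forward and the $|\mu|>1$ eigencomponent backward --- equivalently, keep the factor $(I-M^{N})^{-1}$ attached to $M^{m}$ so that each eigencomponent of the solution kernel decays geometrically in the cyclic distance --- and then conclude with a discrete Young/Schur estimate. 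This is the heart of the lemma and must be written out. Second, the case $s=\tfrac14$ is not covered by your argument at all: there $\det B^{+}=\det B^{-}=0$ simultaneously, so there is no transfer matrix in either direction, and ``propagating the recurrence in the non-degenerate direction'' is not an available move. The correct repair is structural: since $B^{+}+B^{-}=I$ and $B^{+}B^{-}=\det(B^{+})\,I=0$, at $s=\tfrac14$ the matrices $B^{\pm}$ are complementary rank-one idempotents, so each interface equation splits along $\operatorname{range}(B^{+})\oplus\operatorname{range}(B^{-})$ into $(-1)^{k}\gamma_{j+1}B^{+}\mathbf{w}_{j+1}=B^{+}\mathbf{R}_{j+1/2}$ and $\gamma_{j}B^{-}\mathbf{w}_{j}=B^{-}\mathbf{R}_{j+1/2}$; each $\mathbf{w}_{j}$ is then determined locally by its two adjacent residuals (a rotated Gauss--Radau projection), with the same scaling and hence the same optimal rate. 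With these two repairs, your outline becomes a complete and correct proof for the periodic one-dimensional setting of the paper.
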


\begin{theorem}[\bf{Optimal error estimate}]\label{multierror}
Suppose $\beta_1,\beta_2\geq 0$ and $\alpha^2+\beta_1\beta_2\ne0$. Let $u_h$ and $v_h$ be the numerical solutions obtained by the semi-discrete DG method 
\eqref{multiphi}-\eqref{multiphii}, and $u,~v \in L^2(\Omega\times[0,T],H^{k+2})\cap L^4(\Omega\times [0,T];\mathbb{R})$ be the strong solutions to \eqref{multi1} with 
$f(\cdot,\cdot,u(\cdot),v(\cdot)), g(\cdot,\cdot,u(\cdot),v(\cdot))\in L^2(\Omega\times[0,T],H^{k+1})$, then there 
  exists some constant $C$ such that  
%  \YX{The space of $u,v$ are more restrictive in [17]. Try to find out whether what we write here is sufficient.]}
  \begin{eqnarray}\label{multiesti1}
      \mathbb{E}\Big(\norm{u-u_h}^2+\norm{v-v_h}^2\Big)\le Ch^{2k+2}.
  \end{eqnarray}
  \end{theorem}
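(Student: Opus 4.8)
The plan is to carry out the standard projection-based energy argument for DG error estimates, now adapted to the stochastic setting through It\^o's formula. First I would split the errors using the global projection of Lemma \ref{multiprojectionerror}, writing $u-u_h=\xi_u-\eta_u$ and $v-v_h=\xi_v-\eta_v$ with $\xi_u=\mathcal{P}^{\alpha,\beta_1}u-u_h$, $\eta_u=\mathcal{P}^{\alpha,\beta_1}u-u$, $\xi_v=\mathcal{P}^{-\alpha,\beta_2}v-v_h$, and $\eta_v=\mathcal{P}^{-\alpha,\beta_2}v-v$. Choosing $u_h(\cdot,0),v_h(\cdot,0)$ as the projections of the initial data gives $\xi_u(\cdot,0)=\xi_v(\cdot,0)=0$, while Lemma \ref{multiprojectionerror} controls the projection parts by $\norm{\eta_u}^2+\norm{\eta_v}^2\le Ch^{2k+2}$. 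It then suffices to bound $\mathbb{E}(\norm{\xi_u}^2+\norm{\xi_v}^2)$ and to conclude by the triangle inequality.

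Next I would derive the error equation for $(\xi_u,\xi_v)$. Since the exact solution is continuous, it satisfies the weak formulation \eqref{multiphi}--\eqref{multiphii} with single-valued fluxes $\widehat{u}=u$, $\widehat{v}=v$, so subtracting the scheme and using linearity of the spatial DG form in $(u_h,v_h)$ produces equations driven by $u-u_h$ and $v-v_h$. The crucial point, and the reason for using this particular global projection, is that the contributions of $\eta_u,\eta_v$ vanish exactly: the volume terms $\int_{I_j}\eta_u\varphi_x\,dx$ drop out because $\varphi_x\in P^{k-1}$ and $\eta$ is $L^2$-orthogonal to $P^{k-1}$ by \eqref{multip1}, while the interface flux combinations $\{\eta_u\}+\alpha[\eta_u]-\beta_1[\eta_v]$ and $\{\eta_v\}-\alpha[\eta_v]-\beta_2[\eta_u]$ vanish at every $x_{j+1/2}$ by the matching conditions \eqref{multip1}--\eqref{multip2} together with the continuity of $u,v$. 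What survives is an equation in which the spatial operator acts only on the discrete errors $\xi_u,\xi_v$, with forcing given by the time-differentials $d\eta_v,d\eta_u$ of the projection errors and the noise differences $f(\cdot,u,v)-f(\cdot,u_h,v_h)$ and $g(\cdot,u,v)-g(\cdot,u_h,v_h)$.

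I would then run the energy method exactly as in Theorem \ref{multithm1}: take $\varphi=\xi_v$ and $\widetilde{\varphi}=\xi_u$, add the two equations so that the interface terms telescope under the periodic boundary condition and the $\beta_1,\beta_2\ge0$ terms contribute $-\beta_1[\xi_v]^2-\beta_2[\xi_u]^2\le0$. Applying It\^o's formula to $\norm{\xi_u}^2+\norm{\xi_v}^2$ and computing the quadratic-variation terms by the same Legendre-basis and It\^o-isometry argument used in \eqref{multiquadr}--\eqref{multimau}, the stochastic integral $\int_0^t\int_I(\cdots)\,dW_\tau$ is a martingale and vanishes in expectation. After integrating in time and taking expectation I expect an inequality of the form $\mathbb{E}(\norm{\xi_u}^2+\norm{\xi_v}^2)(t)\le C\int_0^t\mathbb{E}(\norm{\xi_u}^2+\norm{\xi_v}^2)\,ds+R(t)$, where $R(t)$ collects the projection-error forcing.

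The remainder $R(t)$ is controlled as follows. The drift part of $d\eta_v$ (respectively $d\eta_u$) equals the projection error of $v_t=-u_x$ (respectively $u_t=-v_x$), so by Lemma \ref{multiprojectionerror} applied to $u_x,v_x\in H^{k+1}$ it is $O(h^{k+1})$, which is precisely where the $H^{k+2}$ regularity of $u,v$ enters, and Young's inequality turns the pairing against $\xi$ into $O(h^{2k+2})$ plus an absorbable multiple of $\norm{\xi}^2$. The noise difference is bounded by the Lipschitz assumptions \eqref{multiassum1}--\eqref{multiassum2} as $C(|u-u_h|+|v-v_h|)\le C(|\xi_u|+|\xi_v|+|\eta_u|+|\eta_v|)$, whose quadratic-variation contribution feeds back a further multiple of $\mathbb{E}\norm{\xi}^2$ together with the $O(h^{2k+2})$ projection errors of $u,v$ and of $f,g$ (the latter using $f,g\in L^2(\Omega\times[0,T],H^{k+1})$), with the $L^4$ integrability of $u,v$ used to control the solution-dependent noise contributions. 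With $R(t)\le Ch^{2k+2}$, Gr\"onwall's inequality gives $\mathbb{E}(\norm{\xi_u}^2+\norm{\xi_v}^2)\le Ch^{2k+2}$, and combining with the projection bound yields \eqref{multiesti1}. I expect the main obstacle to be the rigorous treatment of the stochastic forcing: because the noise is multiplicative and solution-dependent, the quadratic-variation terms must be computed carefully as in Theorem \ref{multithm1}, and the $L^2$ projection $\mathcal{P}$ appearing in the scheme's noise must be reconciled with the global projection used in the decomposition, so that the Lipschitz estimates close the Gr\"onwall loop rather than leaving an uncontrolled $\norm{\xi}$-dependence.
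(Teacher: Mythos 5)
Your proposal is correct and follows essentially the same route as the paper's proof: the same decomposition via the global projections $\mathcal{P}^{\alpha,\beta_1}$, $\mathcal{P}^{-\alpha,\beta_2}$ with projected initial data, the same energy argument with test functions $\xi^u,\xi^v$ exploiting the vanishing of the $\epsilon$-terms, the same It\^{o}-formula/quadratic-variation computation split into a projection-error part and a Lipschitz part, and the same treatment of the $d\epsilon$ forcing (drift equal to the projection error of $u_x,v_x\in H^{k+1}$, hence the $H^{k+2}$ hypothesis) before closing with Gr\"onwall. The technical obstacles you flag at the end are exactly the ones the paper resolves, and in the same way.
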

  \begin{proof} 
     Since the exact solutions $v$ and $u$ also satisfy equations \eqref{multiphi}-\eqref{multiphii}, taking the difference of the semi-discrete method and the equations satisfied by the exact solutions yields the following error equations:
   \begin{eqnarray}
  \int_{I_j}d(v-v_h)\varphi(x)dx=\Big(\int_{I_j}(u-u_h)\varphi_xdx 
  -((u-\widehat{u}_h)\varphi^-)_{j+\frac12}+((u-\widehat{u}_h)\varphi^+)_{j-\frac12}\Big)dt\notag 
  \\
  +\int_{I_j} (f(x,t,u,v)-f(x,t,u_h,v_h))\varphi dxdW_t,\label{multiphie}\\
    \int_{I_j}d(u-u_h)\widetilde{\varphi}(x)dx=\Big(\int_{I_j}(v-v_h)\widetilde{\varphi}_xdx 
  -((v-\widehat{v}_h)\widetilde{\varphi}^-)_{j+\frac12}+((v-\widehat{v}_h)\widetilde{\varphi}^+)_{j-\frac12}\Big)dt
  \notag \\
  +\int_{I_j} (g(x,t,u,v)-g(x,t,u_h,v_h))\widetilde{\varphi} dxdW_t.\label{multiphiie}
  \end{eqnarray}
Decompose the numerical error into the following two terms
  \begin{equation}\label{multiooo}
   v-v_h=\xi^v-\epsilon^v,\qquad u-u_h=\xi^u-\epsilon^u,
  \end{equation} 
where
  \begin{eqnarray*}
    \xi^v=\mathcal{P}^{-\alpha,\beta_2}v-v_h  \in V_h^k,~~~\epsilon^v=\mathcal{P}^{-\alpha,\beta_2}v-v,~~~
    \xi^u=\mathcal{P}^{\alpha,\beta_1}   u-u_h  \in V_h^k,~~~\epsilon^u=\mathcal{P}^{\alpha,\beta_1}   u-u.
  \end{eqnarray*}
For the initial condition, we choose
\[v_h(x,0)=\mathcal{P}^{-\alpha,\beta_2}v(x,0), \qquad u_h(x,0)=\mathcal{P}^{\alpha,\beta_1} u(x,0), \]
hence $\xi^v(x,0)=\xi^u(x,0)=0$. For $q\in\{f,g\}$ and any $w\in V_h^k$, define 
   \[\mathcal{E}^j_q(w)=\int_{I_j}\big(q(x,t,u,v)-q(x,t,u_h,v_h)\big)wdxdW_t, \]
which is an It\^{o} integral, hence we have $\mathbb{E}\Big(\mathcal{E}^j_q(w)\Big)=0$.
By choosing the test functions $\varphi=\xi^v$, $\widetilde{\varphi}=\xi^u$ in \eqref{multiphie}-\eqref{multiphiie} and summing up the equations, we obtain
  \begin{align}
&          \int_{I_j}d\xi^v\xi^v+d\xi^u\xi^u dx=\int_{I_j}d\epsilon^v\xi^v+d\epsilon^u\xi^u dx
          +\mathcal{E}^j_f(\xi^v)+\mathcal{E}^j_g(\xi^u)\notag \\       
    &\hskip18mm
    +\Big(\int_{I_j}\xi^u\xi^v_xdx
    -\big((\{\xi^u\}+\alpha[\xi^u]-\beta_1[\xi^v])(\xi^v)^-\big)_{j+\frac12}
    +\big((\{\xi^u\}+\alpha[\xi^u]-\beta_1[\xi^v])(\xi^v)^+\big)_{j-\frac12}\Big)dt \notag \\
    &\hskip18mm
    -\Big(\int_{I_j}\epsilon^u\xi^v_xdx
     -\big((\{\epsilon^u\}+\alpha[\epsilon^u]-\beta_1[\epsilon^v])(\xi^v)^-\big)_{j+\frac12}
    +\big((\{\epsilon^u\}+\alpha[\epsilon^u]-\beta_1[\epsilon^v])(\xi^v)^+\big)_{j-\frac12}\Big)dt\notag \\
    &\hskip18mm
    +\Big(\int_{I_j}\xi^v\xi^u_xdx
    -\big((\{\xi^v\}-\alpha[\xi^v]-\beta_2[\xi^u])(\xi^u)^-\big)_{j+\frac12}
    +\big((\{\xi^v\}-\alpha[\xi^v]-\beta_2[\xi^u])(\xi^u)^+\big)_{j-\frac12}\Big)dt \notag \\ 
    &\hskip18mm
    -\Big(\int_{I_j}\epsilon^v\xi^u_xdx
     -\big((\{\epsilon^v\}-\alpha[\epsilon^v]-\beta_2[\epsilon^u])(\xi^u)^-\big)_{j+\frac12}
    +\big((\{\epsilon^v\}-\alpha[\epsilon^v]-\beta_2[\epsilon^u])(\xi^u)^+\big)_{j-\frac12}\Big)dt\notag \\
    &\hskip18mm
    = \int_{I_j}d\epsilon^v\xi^v+d\epsilon^u\xi^u dx+ 
    \Big(\widetilde{\Theta}_{j-\frac12}-\widetilde{\Theta}_{j+\frac12}\Big)dt
    +\mathcal{E}^j_f(\xi^v)+\mathcal{E}^j_g(\xi^u)-(\beta_1[\xi^v]^2_{j+\frac12}+\beta_2[\xi^u]^2_{j+\frac12})dt,\notag \\
    &\hskip18mm
    \le \int_{I_j}d\epsilon^v\xi^v+d\epsilon^u\xi^u dx+ 
    \Big(\widetilde{\Theta}_{j-\frac12}-\widetilde{\Theta}_{j+\frac12}\Big)dt
    +\mathcal{E}^j_f(\xi^v)+\mathcal{E}^j_g(\xi^u),	\label{multierr3}
  \end{align}
  where 
  $\widetilde{\Theta}=\big(\frac12+\alpha\big)(\xi^u)^+(\xi^v)^-+\big(\frac12-\alpha\big)(\xi^v)^+(\xi^u)^-$.
The last equality follows from the definition of the special projection which leads to 
(for the error term $\epsilon^v=\mathcal{P}^{-\alpha,\beta_2}v-v$, $\epsilon^u=\mathcal{P}^{\alpha,\beta_1} u-u$)    
  \[\int_{I_j}\epsilon^u\xi^v_xdx=\int_{I_j}\epsilon^v\xi^u_xdx=(\{\epsilon^u\}+\alpha[\epsilon^u]-\beta_1[\epsilon^v])_{j\pm\frac12}
  =(\{\epsilon^v\}-\alpha[\epsilon^v]-\beta_2[\epsilon^u])_{j\pm\frac12}=0,\]
and an integration by parts which leads to
\begin{eqnarray*}
 && \int_{I_j}\xi^u\xi^v_xdx+\int_{I_j}\xi^v\xi^u_xdx-\big((\{\xi^u\}+\alpha[\xi^u]-\beta_1[\xi^v])(\xi^v)^-\big)_{j+\frac12}
    +\big((\{\xi^u\}+\alpha[\xi^u]-\beta_1[\xi^v])(\xi^v)^+\big)_{j-\frac12}\\
     &&~~~~~~~~~~~~~~~~~~~~~~~~~~~~~~~-\big((\{\xi^v\}-\alpha[\xi^v]-\beta_2[\xi^u])(\xi^u)^-\big)_{j+\frac12}
    +\big((\{\xi^v\}-\alpha[\xi^v]-\beta_2[\xi^u])(\xi^u)^+\big)_{j-\frac12}\\
    &&~~~~~~~~~~~~~~~~~~~~~~~~~~~~~~~~=\widetilde{\Theta}_{j-\frac12}-\widetilde{\Theta}_{j+\frac12}
    -(\beta_1[\xi^v]^2_{j+\frac12}+\beta_2[\xi^u]^2_{j+\frac12}).
\end{eqnarray*}
    
By It\^{o}'s lemma, we have
\begin{eqnarray}\label{multiito2}
 d(\xi^v)^2=2d\xi^v\xi^v+d\langle \xi^v,\xi^v\rangle_t,~
 d(\xi^u)^2=2d\xi^u\xi^u+d\langle \xi^u,\xi^u\rangle_t.
\end{eqnarray}
Note that 
\[d(\mathcal{P}^{\alpha,\beta_1}  u)=\mathcal{P}^{\alpha,\beta_1}  (du)=\mathcal{P}^{\alpha,\beta_1}  (-v_xdt+gdW_t)=
\mathcal{P}^{\alpha,\beta_1}  (-v_xdt)+\mathcal{P}^{\alpha,\beta_1}  (g)dW_t.\]
We then have, for any test function $\widetilde{\varphi}$, 
\begin{eqnarray}\label{multiproj}
  \int_{I_j}(d\mathcal{P}^{\alpha,\beta_1}  u)\widetilde{\varphi}dx
  =\int_{I_j}\mathcal{P}^{\alpha,\beta_1}  (-v_x)\widetilde{\varphi}dxdt+\int_{I_j}\widetilde{\varphi}
  \mathcal{P}^{\alpha,\beta_1}  (g)dxdW_t.\label{multianafopro1}
\end{eqnarray}
Subtracting \eqref{multiphii} from \eqref{multiproj}, we have 
\begin{eqnarray}\label{multixiues}
  \begin{split}
    \int_{I_j}d\xi^u\widetilde{\varphi}dx=\Big(\int_{I_j}(-v_h\widetilde{\varphi_x}+\mathcal{P}^{\alpha,\beta_1}  
  (-v_x)\widetilde{\varphi})
  dx+(\widehat{v_h}\widetilde{\varphi}^-)_{j+\frac12}-(\widehat{v_h}\widetilde{\varphi}^+)_{j-\frac12}
  \Big)dt\\
  +\int_{I_j}(\mathcal{P}^{\alpha,\beta_1}  
  (g(\cdot,u,v))-g(\cdot,u_h,v_h))\widetilde{\varphi}dxdW_t.
  \end{split}
\end{eqnarray}
Therefore, after integrating over $t$, we have 
\begin{eqnarray}
  \begin{split}
   \int_{I_j}\xi^u\widetilde{\varphi}dx=\int_0^t\Big(\int_{I_j}(-v_h\widetilde{\varphi_x}+\mathcal{P}^{\alpha,\beta_1}  
  (-v_x)\widetilde{\varphi})
  dx+(\widehat{v_h}\widetilde{\varphi}^-)_{j+\frac12}-(\widehat{v_h}\widetilde{\varphi}^+)_{j-\frac12}
  \Big)d\tau\\
  +\int_0^t\int_{I_j}(\mathcal{P}^{\alpha,\beta_1}  
  (g(\cdot,u,v))-g(\cdot,u_h,v_h))\widetilde{\varphi}dxdW_{\tau}.
  \end{split}
\end{eqnarray}
For any semimartingale $Y$, we have 
\begin{eqnarray}
  \int_{I_j}\langle\xi^u,Y\rangle_t \widetilde{\varphi}dx=\Big\langle\int_0^t\int_{I_j}(\mathcal{P}^{\alpha,\beta_1}  
  (g(\cdot,u,v))-g(\cdot,u_h,v_h))\widetilde{\varphi}dxdW_{\tau},Y\Big\rangle_t.
\end{eqnarray}
Let us write $\xi^u=\sum_{l=0}^k (\xi^u)_j^l\phi_j^l$. After repeating the same process as in \eqref{multiquadr} and applying the Cauchy inequality, we obtain
\begin{eqnarray} \label{thm2.3eq1}
  \begin{split}
     &\hskip-5mm
   \int_{I_j}\langle\xi^u,\xi^u\rangle_tdx=\int_{I_j}\Big\langle\int_0^t \mathcal{P}^{\alpha,\beta_1}  
  (g(\cdot,u,v))-g(\cdot,u_h,v_h)dW_{\tau},\xi^u\Big\rangle_tdx\\
  &=\int_{I_j}\Big\langle\int_0^t \mathcal{P}^{\alpha,\beta_1}  
  (g(\cdot,u,v))-g(\cdot,u,v)dW_{\tau}, \xi^u\Big\rangle_t+\Big\langle\int_0^t
  g(\cdot,u,v)-g(\cdot,u_h,v_h)dW_{\tau}, \xi^u\Big\rangle_tdx\\
  &\le \frac12 \int_{I_j}\langle\xi^u,\xi^u\rangle_tdx+
  C\int_{I_j}\Big\langle\int_0^t \mathcal{P}^{\alpha,\beta_1}  
  (g)-g dW_{\tau}, \int_0^t\mathcal{P}^{\alpha,\beta_1}  
  (g)-g dW_{\tau}\Big\rangle_tdx\\
  &+C\int_{I_j}\Big\langle \int_0^t
  g(\cdot,u,v)-g(\cdot,u_h,v_h)dW_{\tau}, \int_0^t
  g(\cdot,u,v)-g(\cdot,u_h,v_h)dW_{\tau}\Big\rangle_tdx.\\
  \end{split}
\end{eqnarray}
Introduce the notations
\begin{eqnarray*}
  &&Qu_1=\int_{I_j}\Big\langle\int_0^t \mathcal{P}^{\alpha,\beta_1}  
  (g)-g dW_{\tau}, \int_0^t\mathcal{P}^{\alpha,\beta_1}  
  (g)-g dW_{\tau}\Big\rangle_tdx,\\
   &&Qu_2=\int_{I_j}\Big\langle \int_0^t
  g(\cdot,u,v)-g(\cdot,u_h,v_h)dW_{\tau}, \int_0^t
  g(\cdot,u,v)-g(\cdot,u_h,v_h)dW_{\tau}\Big\rangle_tdx.
\end{eqnarray*}
%Let $\widetilde{\varphi}=d\xi^u$, and by an application of Young's inequality we then obtain
%\begin{eqnarray}
%  \begin{split}
%    \int_{I_j}(d\xi^u)^2dx&=\int_{I_j}(\mathcal{P}^{\alpha,\beta_1}  (g(\cdot,u,v))-g(\cdot,u_h,v_h))d\xi^u dxdW_t\\
%    &=\int_{I_j}(\mathcal{P}^{\alpha,\beta_1}  (g(\cdot,u,v))-g(\cdot,u,v))d\xi^u dxdW_t
%    +\int_{I_j}((g(\cdot,u,v))-g(\cdot,u_h,v_h))d\xi^u dxdW_t\\
%    &\le \frac12 \int_{I_j}
%(d\xi^u)^2dx+C \int_{I_j}(\mathcal{P}^{\alpha,\beta_1}  g-g)^2dxdt 
%+C    \int_{I_j}|g(\cdot,u,v)-g(\cdot,u_h,v_h)|^2dxdt,\label{multianforpro}
%  \end{split}
%\end{eqnarray}
Recall the assumptions \eqref{multiassum1} and \eqref{multiassum2}, and by It\^{o} isometry we have 
\begin{eqnarray}\label{multiguh}
  \begin{split}
    \mathbb{E}(Qu_2)=\int_{I_j}\int_0^t\mathbb{E}|g(\cdot,u,v)-g(\cdot,u_h,v_h)|^2d\tau dx&\le 
    C\int_{I_j}\int_0^t\mathbb{E}(|u-u_h|^2+|v-v_h|^2 )d\tau dx\\
    &=C\int_{I_j}\int_0^t\mathbb{E}(|\xi^u-\epsilon^u|^2+
    |\xi^v-\epsilon^v|^2)d\tau dx,
  \end{split}
\end{eqnarray}
and 
\[\mathbb{E}(Qu_1)=\int_{I_j}\int_0^t\mathbb{E}(\mathcal{P}^{\alpha,\beta_1}  g-g)^2d\tau dx.\]
Therefore, after taking the expectation of equation \eqref{thm2.3eq1}, we conclude 
\begin{eqnarray}\label{multidxiu}
  \int_{I_j}\mathbb{E}(\langle\xi^u,\xi^u\rangle_t) dx\le C\int_{I_j}\int_0^t\mathbb{E}(\mathcal{P}^{\alpha,\beta_1}  
  g-g)^2d\tau dx
  +C\int_{I_j}\int_0^t\mathbb{E}(|\xi^u-\epsilon^u|^2+
    |\xi^v-\epsilon^v|^2)d\tau dx,
\end{eqnarray}
and for the same reason, 
\begin{eqnarray}\label{multidxieta}
  \int_{I_j}\mathbb{E}(\langle \xi^v,\xi^v\rangle_t)dx\le C\int_{I_j}\int_0^t\mathbb{E}(\mathcal{P}^{-\alpha,\beta_2}f-f)^2d\tau dx+C\int_{I_j}\int_0^t\mathbb{E}(|\xi^u-\epsilon^u|^2+
    |\xi^v-\epsilon^v|^2)d\tau dx.
\end{eqnarray}

After summing over all cells $I_j$, utilizing the periodic boundary conditions and integrating equations \eqref{multierr3}, \eqref{multiito2} from $0$ to $t$, we can take the expectation of the resulting equations and obtain
\begin{eqnarray*}
  \mathbb{E}\Big(\norm{\xi^v(x,t)}^2+\norm{\xi^v(x,t)}^2\Big)&\le&
    \norm{\xi^v(x,0)}^2+\norm{\xi^v(x,0)}^2+
  \mathbb{E}\Big(\int^t_0\int_I d\epsilon^v\xi^v+d\epsilon^u\xi^udxd\tau\Big)  \\  
    &+&  \int_0^t\int_{I}\mathbb{E}(\langle \xi^u,\xi^u\rangle_t)dxd\tau
    +  \int_0^t\int_{I}\mathbb{E}(\langle \xi^v,\xi^v\rangle_t)dxd\tau.
\end{eqnarray*}
%  \begin{align}
%& \mathbb{E} \Big(\int_{I}  d(\xi^v)^2 +  d(\xi^u)^2 dx \Big)
%    \le \mathbb{E}\Big( \int_{I}d\epsilon^v\xi^v+d\epsilon^u\xi^u dx \Big)
%    +  \int_0^t\int_{I}\mathbb{E}(\langle \xi^u,\xi^u\rangle_t)dxd\tau
%    +  \int_0^t\int_{I}\mathbb{E}(\langle \xi^v,\xi^v\rangle_t)dxd\tau,	\label{thm2.3eq2}
%  \end{align}
%  where 
%  
Following the same derivation of Eq. \eqref{multixiues}, we have 
\begin{eqnarray}
   \begin{split}
    \int_{I}d\epsilon^u\widetilde{\varphi}dx=\int_{I}\big(v_x-\mathcal{P}^{\alpha,\beta_1}  
  (v_x)\big)\widetilde{\varphi}
  dxdt
  +\int_{I}(\mathcal{P}^{\alpha,\beta_1}  
  (g)-g)\widetilde{\varphi}dxdW_t.
  \end{split}
\end{eqnarray}
Based on the assumptions, we know that $\int_{I_j}(\mathcal{P}^{\alpha,\beta_1}  
  (g)-g)\widetilde{\varphi}dxdW_t$ is a martingale. Therefore 
  \begin{eqnarray}\label{multidepsilones1}
    \mathbb{E}\Big(\int_0^t\int_Id\epsilon^u\xi^u dx\Big)=\mathbb{E}\Big(\int_0^t\int_{I}\big(v_x-\mathcal{P}^{\alpha,\beta_1}  
  (v_x)\big)\xi^u
  dxd\tau\Big)\le Ch^{2k+2}+\int_0^t\mathbb{E}\norm{\xi^u(x,\tau)}^2d\tau,
  \end{eqnarray}
after applying the error estimate of the projection in Lemma \ref{multiprojectionerror}.  
  Similarly, we have
  \begin{eqnarray}\label{multidepsilones2}
    \mathbb{E}\Big(\int_0^t\int_Id\epsilon^v\xi^v dx\Big)\le Ch^{2k+2}+\int_0^t\mathbb{E}\norm{\xi^v(x,\tau)}^2d\tau.
  \end{eqnarray}
Note that the initial condition satisfies $\norm{\xi^v(x,0)}=\norm{\xi^u(x,0)}=0$, and we can utilize the results in \eqref{multidxiu}-\eqref{multidepsilones2} to obtain
%\YX{Do we need $g,f\in H^{k+1}$ space for the projection error to be bounded?}
\begin{align*}
  &\mathbb{E}\Big(\norm{\xi^v(x,t)}^2+\norm{\xi^v(x,t)}^2\Big)  \\
  & \hskip1cm
  \le\mathbb{E}\Big(\int^t_0\int_I d\epsilon^v\xi^v+d\epsilon^u\xi^udx\Big) 
  +\mathbb{E}\Big(\int_0^t\norm{\mathcal{P}^{\alpha,\beta_1}  g-g}^2+\norm{\mathcal{P}^{-\alpha,\beta_2}f-f}^2d\tau \Big)\\
  & \hskip1cm
  +\mathbb{E}\Big(\int_0^t \norm{\xi^v(x,\tau)}^2+\norm{\xi^u(x,\tau)}^2 d\tau \Big)+Ch^{2k+2}\\
  & \hskip1cm
  \le C\int_0^t \mathbb{E}(\norm{\xi^v(x,\tau)}^2+\norm{\xi^u(x,\tau)}^2)d\tau+Ch^{2k+2}.
\end{align*}  
The optimal error estimate \eqref{multiesti1} follows from applying the Gronwall's inequality and the optimal projection error. 
  \end{proof} 
  \begin{remark}\label{rmk1} %\YX{To be revised}
In the proof, we assumed enough regularity of the exact solutions to study the ``best'' spatial convergence rate of the proposed method. Such convergence rate has also been observed on some numerical examples in Section \ref{multinum}. 
%In the literature, $H^2$ regularity for solutions to stochastic Maxwell equations is often assumed, and in 
%\cite{CHJ2019}
%the authors show that for any given integer $k$, the solution is uniformly bounded in $\mathcal{D}(M^k)$ norm if $u_0$ has bounded $\mathcal{D}(M^k)$ norm, where $\norm{u}_{\mathcal{D}(M^k)}=(\norm{u}^2+\norm{M^ku}^2)^{\frac12}$ with the operator $M$ defined by 
%\[M=\left(\begin{array}{cc} 0&\nabla\times \\ -\nabla\times&0 \end{array}\right).\]
%Same remark also for Theorem \ref{multi2derror} and Theorem \ref{multitrierror}
  \end{remark}

\section{Two-dimensional stochastic Maxwell equations with multiplicative noise}\label{multi2D}
\setcounter{equation}{0}\setcounter{figure}{0}\setcounter{table}{0}

In this section, we study two-dimensional stochastic Maxwell equations in the following form
 \begin{equation}\label{multi2d}
  \begin{cases}
dE-T_xdt+S_ydt=f(x,t,\bm{u})dW_t,\\
dS+E_ydt=g(x,t,\bm{u})dW_t,\\
dT-E_xdt=r(x,t,\bm{u})dW_t,
\end{cases}
\end{equation}
where $\bm{u}=(E,S,T)$,  and $f,~g,~r$ are 
smooth functions that satisfy the following Lipschitz continuous and linear growth assumptions:
\begin{align}
 &| f(x,t,\bm{u}_1)-f(x,t,\bm{u}_2)|+| g(x,t,\bm{u}_1)-g(x,t,\bm{u}_2)|+|r(x,t,\bm{u}_1)-r(x,t,\bm{u}_2)|\le 
 C|\bm{u}_1-\bm{u}_2|, \label{multi2dassum1}\\
 &|f(x,t,\bm{u})|+|g(x,t,\bm{u})|+|r(x,t,\bm{u})|\le C(1+|E|+|S|+|T|).\label{multi2dassum2}
\end{align}
%\YX{Talk more about regularity?}
We refer to the beginning of Section \ref{multi1D} on the discussion of regularity properties of the solutions.
%JS{Similar to one dimensional case, the solutions to \eqref{multi2d} satisfy 
%the regularity results \eqref{multiregu1} and \eqref{multiregu2} in two 
%dimensional case, and here we assume $H^{k+1}$ regularity for all solutions.}
For this model, we have the following energy law satisfied by the exact solutions. The proof follows almost the same analysis as that of Theorem \ref{multi1denergypdelevel} and is skipped here.
\begin{theorem}[\bf{Continuous energy law}]\label{multi2denergypdelevel}
  Let $E,~S,~T$ be the solutions to the qeuation \eqref{multi2d} on the bounded domain $\Omega$ with the periodic boundary condition, then for any $t$, the global stochastic energy satisfies the following
energy law
%\begin{eqnarray}\label{multi2dEL}
% \mathcal{E}(t)=\mathcal{E}(0)+2\int_0^t\int_J\int_I \lambda_2(S+T)-\lambda_1 E dW_{\tau}dxdy+(\lambda_1^2+2\lambda_2^2)Tr(Q)t,
%\end{eqnarray}
  \begin{align}\label{multilinearpdelevel}
    &\mathbb{E}\Big(\int_\Omega E(x,y,t)^2+S(x,y,t)^2+T(x,y,t)^2 dxdy\Big) \notag \\
     &\hskip10mm
     =\int_\Omega E(x,y,0)^2+S(x,y,0)^2+T(x,y,0)^2 dxdy+\int_0^t \mathbb{E}(\norm{f}^2+\norm{g}^2+\norm{r}^2)d\tau.
  \end{align}
\end{theorem}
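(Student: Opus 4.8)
The plan is to mirror the proof of Theorem \ref{multi1denergypdelevel}, replacing the two-field energy $u^2+v^2$ by the three-field energy $E^2+S^2+T^2$. First I would apply It\^o's lemma to each square and sum, obtaining
\begin{align*}
d\Big(\int_\Omega E^2+S^2+T^2\,dxdy\Big)
&=\int_\Omega\big(2E\,dE+2S\,dS+2T\,dT\big)\,dxdy\\
&\quad+\int_\Omega\big(d\langle E,E\rangle_t+d\langle S,S\rangle_t+d\langle T,T\rangle_t\big)\,dxdy.
\end{align*}
Substituting the drifts from \eqref{multi2d}, namely $dE=(T_x-S_y)dt+f\,dW_t$, $dS=-E_y\,dt+g\,dW_t$, and $dT=E_x\,dt+r\,dW_t$, the $dt$ contribution to the first integral is $\int_\Omega\big(2E T_x-2E S_y-2S E_y+2T E_x\big)\,dxdy$, and the crucial step is to show this vanishes.

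Integrating by parts and invoking the periodic boundary conditions, the two $x$-derivative terms pair up, $\int_\Omega 2E T_x\,dxdy=-\int_\Omega 2E_x T\,dxdy$, so that $2E T_x$ cancels $2T E_x$; likewise the two $y$-derivative terms pair up, $\int_\Omega -2E S_y\,dxdy=\int_\Omega 2E_y S\,dxdy$, cancelling $-2S E_y$. Hence the entire drift integrates to zero, exactly as the term $-\int_I(2uv_x+2vu_x)\,dt\,dx$ drops out in \eqref{multi1denergyeq1}. What remains after integrating in time is the martingale term $2\int_0^t\int_\Omega(Ef+Sg+Tr)\,dxdy\,dW_\tau$, which has zero expectation as an It\^o integral, together with the three quadratic-variation terms. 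For the latter I would reproduce the argument leading to \eqref{quadrau}: each field inherits its bracket solely from its own noise coefficient, so that by It\^o isometry $\mathbb{E}\int_\Omega\langle E,E\rangle_t\,dxdy=\int_0^t\mathbb{E}\norm{f}^2\,d\tau$, and similarly with $S\mapsto g$ and $T\mapsto r$. Summing these three identities and taking the expectation of the time-integrated energy balance yields \eqref{multilinearpdelevel}.

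Since the stochastic machinery (It\^o's lemma, the vanishing expectation of the It\^o integral, and It\^o isometry) transfers verbatim from the one-dimensional case, the only genuinely new bookkeeping is the two-dimensional integration by parts, where one must correctly match $E$ with $T$ through the $x$-derivatives and $E$ with $S$ through the $y$-derivatives. This is routine but is the single place where the specific curl-type structure of \eqref{multi2d} enters, and it is why the authors can reasonably assert that the proof follows almost the same analysis and omit it. No regularity beyond that already assumed for Theorem \ref{multi1denergypdelevel} is required.
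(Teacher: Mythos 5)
Your proposal is correct and follows exactly the route the paper intends: the paper skips this proof, stating it is "almost the same analysis" as Theorem \ref{multi1denergypdelevel}, and your argument is precisely that adaptation — It\^o's lemma on each squared field, cancellation of the drift terms $2ET_x+2TE_x$ and $-2ES_y-2SE_y$ via integration by parts under periodic boundary conditions, zero expectation of the It\^o integral, and It\^o isometry for the three quadratic variations. Your identification of the curl-type pairing ($E$ with $T$ in $x$, $E$ with $S$ in $y$) as the only genuinely new step is exactly right.
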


%%%%%%%%%%%%%%%%%%%%%%%%%%%%%%%%%%%%%%%%%%%
\subsection{Triangular meshes}
In this subsection we will consider DG methods for stochastic Maxwell equations with multiplicative noise on triangular discretization of the domain $\Omega$, and carry out the corresponding analysis.
Firstly we rewrite the equation \eqref{multi2d} in the following compact form: 
\begin{eqnarray}\label{multi2dtri}
  dE=\nabla\cdot \vect{U}dt+{F}(\vect{x},t,E,\vect{U})dW_t, ~~~d\vect{U}=\nabla 
  Edt+\vect{G}(\vect{x},t,E,\vect{U})dW_t,
\end{eqnarray}
where $\vect{U}=(T,-S)^T$, and $F$ and $\vect{G}$ are 
functions satisfying \eqref{multi2dassum1} and \eqref{multi2dassum2}.

Let $\Omega= \cup_{K\in\mathcal{T}_h} K$ be a quasi-uniform triangulation of 
the domain $\Omega$, and $\Gamma$ be the collection of triangle faces. For a 
triangle $K$, and a face $\mathscr{F}\in \partial K$, let $\vect{n}$ be the outer normal 
vector on $\mathscr{F}$.  Given a face $\mathscr{F}$, let $\vect{n}_\mathscr{F}$ denote the unit vector across $\mathscr{F}$, whose 
direction is not essential for unspecified $K$, and we further assume there is a unit vector $\vect{r}$ such that $|\vect{r}\cdot\vect{n}_\mathscr{F}|\ge\kappa>0$ for all $\vect{n}_\mathscr{F}$.
 We define the finite element DG space $V_h^k$ as
\[V_h^k=\{v_h:v_h|_K\in P^k(K)\},\]
where $P^k$ is the space of polynomial of degree at most $k$. $\mathbb{V}_h$ is used to denote $(V_h^k)^2$, and denote $(P^k)^2$ as $\vect{P}^k$.
For a given face $\mathscr{F}$, let $K^{\pm}$ be the two neighboring cells. For any 
function $w$ or vector valued function $\vect{v}$, define 
\[w^{\pm}=\lim_{\vect{x}\to\partial K, \vect{x}\in K^{\pm}}w(\vect{x}), ~~~~~~~\vect{v}^\pm = \lim_{\vect{x}\to\partial K, \vect{x}\in K^{\pm}}\vect{v}(\vect{x}). \]
The vector $\vect{n}^{\pm}$ is the outer normal vector on $F$ with respect to $K^\pm$. 
Finally, we define the following notations for averages and jumps across a face $\mathscr{F}$:
\begin{eqnarray}
 && \{w\}=\frac12 (w^++w^-),~~~~~~~\{\vect{v}\}=\frac12(\vect{v}^++\vect{v}^-),\\
 && [w\vect{n}] =w^+\vect{n}^++w^-\vect{n}^-,~~~~~~[\vect{v}\cdot\vect{n}]=\vect{v}^+\cdot\vect{n}^++\vect{v}^-\cdot\vect{n}^-.
\end{eqnarray}
Note that over all the triangles and faces, we have the following equalities
\begin{eqnarray}\label{multiintprop}
  \sum_{K\in\mathcal{T}_h}\int_{\partial 
  K}w\vect{v}\cdot\vect{n}ds=\sum_{\mathscr{F}\in\Gamma}\int_Fw^+\vect{n}^+\cdot\vect{v}^++
  w^-\vect{n}^-\cdot\vect{v}^-ds=\sum_{\mathscr{F}\in\Gamma}\int_F 
  \{w\}[\vect{n}\cdot\vect{v}]+[w\vect{n}]\cdot\{\vect{v}\}ds.
\end{eqnarray}

The DG scheme for the two-dimensional system \eqref{multi2dtri} is: find $E_h\in V^k_h,~ 
\vect{U}_h\in\mathbb{V}_h^k$, such that for all test functions $\varphi\in V_h^k,~~\bm{\psi}\in\mathbb{V}_h^k$, it holds that
\begin{eqnarray}
  &&\int_KdE_h\varphi d\vect{x}+\int_K \vect{U}_h\cdot\nabla\varphi d\vect{x}dt-\int_{\partial K}\bm{\mathcal{F}_1}(\vect{U}_h,E_h)\cdot 
  \varphi\vect{n}dsdt=\int_K F\varphi d\vect{x}dW_t, \label{multitridg1}\\
  &&\int_{K}d\vect{U}_h\cdot 
  \bm{\psi}d\vect{x}+\int_{K}E_h\nabla\cdot\bm{\psi}d\vect{x}dt-\int_{\partial 
  K}\mathcal{F}_2(E_h,\vect{U}_h)\bm{\psi}\cdot\vect{n}dsdt=\int_K\vect{G}\cdot\bm{\psi}d\vect{x}dW_t,\label{multitridg2}
\end{eqnarray}
where the numerical fluxes are chosen to be 
\begin{equation}\label{multiflux2}
\bm{\mathcal{F}_1}(\vect{U}_h,E_h)=\{\vect{U}_h\}-\bm{\alpha}[\vect{U}_h\cdot\vect{n}]-\beta_1[E_h\vect{n}],~~~
\mathcal{F}_2(E_h,\vect{U}_h)=\{E_h\}+\bm{\alpha}\cdot[E_h\vect{n}]-\beta_2[\vect{U}_h\cdot\vect{n}],
\end{equation}
with $\bm{\alpha}=\alpha \sign(\vect{r}\cdot\vect{n}_\mathscr{F})\vect{n}_\mathscr{F}$ for some number $\alpha$. %,  and $\beta_1\ge0,~\beta_2> 0$.
Note that these generalized numerical fluxes can be viewed as two-dimensional extension of the one-dimensional numerical fluxes \eqref{multiflux1}.

Next, we start by showing the following semi-discrete energy law satisfied by numerical solutions of the proposed DG methods.
\begin{theorem}[\textbf{Semi-discrete energy law}]\label{multitrieng}
  Let $E_h$ and $\vect{U}_h$ be the numerical solutions obtained in \eqref{multitridg1} and 
  \eqref{multitridg2} with $\beta_1\ge0,~\beta_2\geq 0$, then we have
  \begin{eqnarray}
    \begin{split}
          \mathbb{E}\Big(\norm{E_h(\vect{x},t)}^2+\norm{\vect{U}_h(\vect{x},t)}^2\Big)\le\norm{E_h(\vect{x},0)}^2+\norm{\vect{U}_h(\vect{x},0)}^2
              +\int_0^t\mathbb{E}\Big(\norm{\mathcal{P}(F)}^2+\norm{\mathcal{P}(\vect{G})}^2\Big)d\tau.
    \end{split}
  \end{eqnarray}
Moreover, the equality holds when $\beta_1=\beta_2=0$ in the numerical fluxes \eqref{multiflux2}.  
\end{theorem}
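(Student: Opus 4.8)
The plan is to mirror the one‑dimensional argument of Theorem \ref{multithm1} almost verbatim, lifting each step to the triangular‑mesh setting with the aid of the summation‑by‑parts identity \eqref{multiintprop}.

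First I would take the test functions $\varphi = E_h$ in \eqref{multitridg1} and $\bm{\psi} = \vect{U}_h$ in \eqref{multitridg2}, then add the two resulting equations. The deterministic (i.e.\ $dt$) part collects the two volume terms $\int_K \vect{U}_h\cdot\nabla E_h + E_h\,\nabla\cdot\vect{U}_h\,d\vect{x}$, which combine into a divergence, together with the two face terms coming from the numerical fluxes \eqref{multiflux2}. Summing over all cells $K$ and invoking \eqref{multiintprop}, the interior volume contributions telescope against the face contributions. The expected outcome, paralleling the quantity $\Theta$ in \eqref{multiqqq}, is that all the ``transport'' pieces cancel and what survives from the $\beta_1,\beta_2$ terms is a nonpositive face contribution of the form $-\sum_{\mathscr{F}}\int_{\mathscr F}\big(\beta_1[E_h\vect{n}]^2 + \beta_2[\vect{U}_h\cdot\vect{n}]^2\big)ds$, which is $\le 0$ precisely because $\beta_1,\beta_2\ge 0$, and vanishes when $\beta_1=\beta_2=0$. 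This gives the inequality analogous to \eqref{multiqqq}, with equality in the $\beta_1=\beta_2=0$ case.

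Next I would handle the stochastic contribution. Applying It\^o's formula as in \eqref{multiIto}, $d\norm{E_h}^2$ and $d\norm{\vect{U}_h}^2$ each produce a martingale part $2\int E_h\,dE_h$, $2\int\vect{U}_h\cdot d\vect{U}_h$ plus a quadratic‑variation correction $d\langle E_h,E_h\rangle_t$, $d\langle\vect{U}_h,\vect{U}_h\rangle_t$. The martingale part integrates into an It\^o integral $\int_0^t\int_\Omega (FE_h + \vect{G}\cdot\vect{U}_h)\,d\vect{x}\,dW_\tau$ with zero expectation. For the quadratic‑variation terms I would repeat the expansion \eqref{multiquadr}–\eqref{multimaeta} in the Legendre basis of $V_h^k$ (resp.\ $\mathbb{V}_h^k$): since the only source of quadratic variation in $E_h$ is the noise term $\int_0^t F\,dW_\tau$ tested against $V_h^k$, the bracket reduces to that of $\int_0^t\mathcal{P}(F)\,dW_\tau$, and It\^o isometry converts $\mathbb{E}\int_\Omega\langle E_h,E_h\rangle_t\,d\vect{x}$ into $\int_0^t\mathbb{E}\norm{\mathcal{P}(F)}^2\,d\tau$, and likewise $\int_0^t\mathbb{E}\norm{\mathcal{P}(\vect{G})}^2\,d\tau$ for $\vect{U}_h$. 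Integrating the combined equation from $0$ to $t$, summing over $K$, taking expectations, and discarding the nonpositive $\beta$‑face term then yields the stated inequality, with equality when $\beta_1=\beta_2=0$.

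The main obstacle I anticipate is purely organizational rather than deep: verifying that the face terms produced by the \emph{vector‑valued} fluxes \eqref{multiflux2} telescope correctly under \eqref{multiintprop}. Because $\bm{\alpha}=\alpha\,\sign(\vect{r}\cdot\vect{n}_\mathscr{F})\vect{n}_\mathscr{F}$ carries an orientation‑dependent sign, I must check that the $\alpha$‑terms from the two equations genuinely cancel across each face $\mathscr{F}$ (independent of the arbitrary choice of $\vect{n}_\mathscr{F}$) and that the $\{\cdot\}$–$[\cdot]$ pairing leaves exactly the clean $\beta_1,\beta_2$ quadratic jumps and no stray cross terms — this is the two‑dimensional analogue of bookkeeping the $(\tfrac12\pm\alpha)$ coefficients in $\Theta$. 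The second subtlety is confirming that $E_h$ and the vector $\vect{U}_h$ each acquire quadratic variation only through their respective noise terms $F\,dW_t$ and $\vect{G}\,dW_t$, so that the Legendre‑expansion argument of \eqref{multiquadr} applies componentwise to $\vect{U}_h$; once that is granted, It\^o isometry closes the estimate exactly as in one dimension.
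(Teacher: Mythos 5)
Your proposal is correct and follows essentially the same route as the paper's proof: test with $(E_h,\vect{U}_h)$, use \eqref{multiintprop} to reduce the deterministic terms to the nonnegative $\beta_1,\beta_2$ jump contribution, apply It\^{o}'s formula, identify the quadratic variations with those of $\int_0^t\mathcal{P}(F)\,dW_\tau$ and $\int_0^t\mathcal{P}(\vect{G})\,dW_\tau$ via the Legendre-basis argument of \eqref{multiquadr}--\eqref{multimaeta}, and conclude by It\^{o} isometry and the vanishing expectation of the martingale term. The two subtleties you flag (cancellation of the orientation-dependent $\bm{\alpha}$ cross terms, and the componentwise quadratic-variation argument for $\vect{U}_h$) are exactly the points the paper handles in \eqref{multianaener} and by reference to the one-dimensional derivation.
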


\begin{proof}
By taking the test function $\varphi=E_h$ in \eqref{multitridg1} and $\bm{\psi}=\vect{U}_h$ in \eqref{multitridg2}, and summing the resulting equations over all cells $K$, we obtain 
  \begin{eqnarray}
    \begin{split}
          &\int_{\Omega} 
    dE_hE_h+d\vect{U}_h\cdot\vect{U}_hd\vect{x}+\int_{\Omega}\vect{U}_h\cdot\nabla{E_h}
    +E_h\nabla\cdot\vect{U}_hd\vect{x}dt\\
    &-\sum_{K\in\mathcal{T}_h}\int_{\partial K}
    \bm{\mathcal{F}_1}(\vect{U}_h,E_h)\cdot 
    E_h\vect{n}+\mathcal{F}_2(E_h,\vect{U}_h)\vect{U}_h\cdot\vect{n}dsdt
    =\int_{\Omega}FE_h+\vect{G}\cdot\vect{U}_h
    d\vect{x}dW_t.\label{multitrienergy1}
    \end{split}
  \end{eqnarray}
  By an integration by parts and applying the equality \eqref{multiintprop}, we can have 
  \begin{eqnarray}
    \begin{split}
     & \int_{\Omega}\vect{U}_h\cdot\nabla{E_h}
    +E_h\nabla\cdot\vect{U}_hd\vect{x}dt-\sum_{K\in\mathcal{T}_h}\int_{\partial K}
    \bm{\mathcal{F}_1}(\vect{U}_h,E_h)\cdot 
    E_h\vect{n}+\mathcal{F}_2(E_h,\vect{U}_h)\vect{U}_h\cdot\vect{n}dsdt\\
    &=\sum_{K\in\mathcal{T}_h}\int_{\partial K}E_h\vect{U}_h\cdot\vect{n}dsdt
    -\sum_{\mathscr{F}\in\Gamma}\int_F \bm{\mathcal{F}_1}(\vect{U}_h,E_h)\cdot[E_h\vect{n}]+\mathcal{F}_2(E_h,\vect{U}_h)[\vect{U}_h\cdot\vect{n}]
    dsdt\\
    &=\sum_{K\in\mathcal{T}_h}\int_{\partial K}E_h\vect{U}_h\cdot\vect{n}dsdt
    -\sum_{\mathscr{F}\in\Gamma}\int_F\Big(\{\vect{U}_h\}-\bm{\alpha}[\vect{U}_h\cdot\vect{n}]\Big)\cdot[E_h\vect{n}]
    +\Big(\{E_h\}+\bm{\alpha}\cdot[E_h\vect{n}]\Big)[\vect{U}_h\cdot\vect{n}]dsdt\\
    &+\sum_{\mathscr{F}\in\Gamma}\int_F \beta_1|[E_h\vect{n}]|^2+\beta_2[\vect{U}_h\cdot 
    \vect{n}]^2dsdt= \sum_{\mathscr{F}\in\Gamma}\int_F \beta_1|[E_h\vect{n}]|^2+\beta_2[\vect{U}_h\cdot 
    \vect{n}]^2dsdt.\label{multianaener}
    \end{split}
  \end{eqnarray}
Therefore, equation \eqref{multitrienergy1} becomes
  \begin{eqnarray}
    \begin{split}
       \int_{\Omega} 
    dE_hE_h+d\vect{U}_h\cdot\vect{U}_hd\vect{x}+\sum_{\mathscr{F}\in\Gamma}\int_F \beta_1|[E_h\vect{n}]|^2+\beta_2[\vect{U}_h\cdot 
    \vect{n}]^2dsdt=\int_{\Omega}FE_h+\vect{G}\cdot\vect{U}_h d\vect{x}dW_t.\label{multitrienergy2}
    \end{split}
  \end{eqnarray}
  
By It\^{o}'s lemma, we have
  \begin{eqnarray*}
    dE_hE_h=\frac12 (d(E_h)^2-d\langle E_h,E_h\rangle_t), 
    ~~d\vect{U}_h\cdot\vect{U}_h=\frac12(d|\vect{U}_h|^2-d\langle \vect{U}_h,\vect{U}_h\rangle_t).
  \end{eqnarray*}  
Following an exact same process as in the derivation of \eqref{multimaeta}, and applying the It\^{o} isometry, we have 
  \begin{eqnarray*}
    \int_K\mathbb{E}\langle E_h,E_h\rangle_td\vect{x}=\int_K\int_0^t 
    \mathbb{E}(\mathcal{P}(F))^2d\tau d\vect{x}, \qquad \int_K\mathbb{E}\langle 
    \vect{U}_h,\vect{U}_h\rangle_td\vect{x}=\int_K\int_0^t\mathbb{E}(\mathcal{P}(\vect{G}))^2d\tau d\vect{x}.
  \end{eqnarray*}
%  In \eqref{multitridg1} choose $\varphi=dE_h$ and $\varphi=\mathbb{P}FdW_t$, we have 
%  \[\int_{K}(dE_h)^2 d\vect{x}= \int_K (\mathbb{P}F)^2d\vect{x}dt,  \]
%  Similarly, we have 
%  \[\int_{K}|d\vect{U}_h|^2 d\vect{x}=  \int_K |\mathbb{P}\vect{U}|^2d\vect{x}dt.\]
By plugging these into \eqref{multitrienergy2}, integrating over time $t$, summing over all $K$, and taking expectation, we obtain
%  \begin{eqnarray}
%    \begin{split}
%      \int_{\Omega}\mathbb{E}((E_h)^2+|\vect{U}_h|^2)d\vect{x}\le 
%       \int_\Omega\mathbb{E}\langle E_h,E_h\rangle_td\vect{x}+\int_K\mathbb{E}\langle 
%    \vect{U}_h,\vect{U}_h\rangle_td\vect{x}
%      = \int_{\Omega}\int_0^t\mathbb{E}\Big(\norm{\mathcal{P}(F)}^2+\norm{\mathcal{P}(\vect{G})}^2\Big)d\tau d\vect{x},\label{multitrienergy4}
%    \end{split}
%  \end{eqnarray}
  \begin{eqnarray}
    \mathbb{E}\Big(\norm{E(\vect{x},t)}^2+\norm{\vect{U}(\vect{x},t)}^2\Big)
    \le \norm{E(\vect{x},0)}^2+\norm{\vect{U}(\vect{x},0)}^2
    +\int_0^t\mathbb{E}\Big(\norm{\mathcal{P}(F)}^2+\norm{\mathcal{P}(\vect{G})}^2\Big)d\tau,
  \end{eqnarray}
where we use the fact that the right-hand side of \eqref{multitrienergy1} $\int_{\Omega}FE_h+\vect{G}\cdot\vect{U}_h d\vect{x}dW_t$ is a martingale, hence its expectation equals to zero. It is easy to observe that the equality holds when $\beta_1=\beta_2=0$.  
\end{proof}
Similar to the one-dimensional case, to provide the optimal error estimate, we need to introduce the following pair of projections $\mathbb{P}^{\vect{U}}\vect{U}$ and $\mathcal{P}^E E$ \cite{SX2021}: for any $K\in\mathcal{T}_h$,
\begin{eqnarray}
 && \int_K (\mathbb{P}^{\vect{U}}\vect{U}-\vect{U})\cdot \vect{v}d\vect{x}=0,
  \quad \forall\vect{v}\in\vect{P}^{k-1}(K), \forall K\in \mathcal{T}_h, \label{multipro1}\\
  && \int_K(\mathcal{P}^EE-E)wd\vect{x}=0,
  \quad \forall w\in P^{k-1}(K), \forall K\in \mathcal{T}_h,  \label{multipro2}\\
  &&\int_{\partial K}\bm{\mathcal{F}_1}(\mathbb{P}^{\vect{U}}\vect{U},\mathcal{P}^E E)\cdot 
  \mu\vect{n}ds=\int_{\partial K}\vect{U}\cdot\mu\vect{n}ds, 
  \quad \forall\mu\in P^k(\mathscr{F}), \forall\mathscr{F}\in\Gamma, \label{multipro3}\\
 && \int_{\partial K}\mathcal{F}_2(\mathcal{P}^E E, \mathbb{P}^{\vect{U}}\vect{U})\nu ds=\int_{\partial K}E\nu ds, 
  \quad \forall\nu\in P^k(\mathscr{F}), \forall\mathscr{F}\in\Gamma.\label{multipro4}
\end{eqnarray}
%\YX{what does $V_h^k(\partial K)$ and $V_h^k(K)$ mean? They are not defined. See how it was done in reference [16].}
The following lemma on the projection is studied in \cite[Lemma 3.1]{SX2021} and will be useful 
in the analysis of error estimate:
\begin{lemma}\label{multiSXlemma}
  Suppose $\beta_1\ge0,~\beta_2>0$, and $|\bm{\alpha}|^2+\beta_1\beta_2\ne0$, then 
  the projection pair defined in \eqref{multipro1} - \eqref{multipro4} is well defined,
 and there exists some constant $C$ independent of mesh size $h$, such that 
  \[\norm{\mathbb{P}^\vect{U}\vect{U}-\vect{U}}^2+\norm{\mathcal{P}^EE-E}^2\le Ch^{2k+2}(\norm{\vect{U}}^2_{H^{k+1}}+
  \norm{E}^2_{H^{k+1}}).\]
\end{lemma}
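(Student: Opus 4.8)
Since this is stated as \cite[Lemma 3.1]{SX2021}, the plan is to reconstruct that argument; it divides naturally into establishing that the coupled projection is well posed and then deriving the optimal approximation bound. First I would check that the linear system \eqref{multipro1}--\eqref{multipro4} is square, so that well-definedness reduces to injectivity. On each triangle $K$ the unknowns $\mathbb{P}^{\vect{U}}\vect{U}\in\vect{P}^k(K)$ and $\mathcal{P}^E E\in P^k(K)$ carry $\tfrac{3(k+1)(k+2)}{2}$ degrees of freedom; the interior moment conditions \eqref{multipro1}--\eqref{multipro2} supply $\tfrac{3k(k+1)}{2}$ equations per cell and the face conditions \eqref{multipro3}--\eqref{multipro4} supply $2(k+1)$ equations per face, and under the periodic setting $|\Gamma|=\tfrac32|\mathcal{T}_h|$ the two totals coincide. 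To prove injectivity I would take the homogeneous data $\vect{U}=0,~E=0$, write $(\vect{p},e)$ for the resulting solution, and exploit the linearity of $\bm{\mathcal{F}_1},\mathcal{F}_2$ in $(\vect{p},e)$ to feed the surviving face conditions into the discrete energy identity already used for Theorem \ref{multitrieng}. Choosing test functions so as to reproduce $\sum_{\mathscr{F}\in\Gamma}\int_F \beta_1|[e\vect{n}]|^2+\beta_2[\vect{p}\cdot\vect{n}]^2\,ds$ together with the $\bm{\alpha}$ contributions, the hypothesis $|\bm{\alpha}|^2+\beta_1\beta_2\ne0$ (with the uniform-direction assumption $|\vect{r}\cdot\vect{n}_\mathscr{F}|\ge\kappa>0$ keeping $\bm{\alpha}$ a fixed nonzero transverse vector on each face) forces every jump of $e$ and of $\vect{p}\cdot\vect{n}$ to vanish, and combined with the interior orthogonality to $\vect{P}^{k-1},P^{k-1}$ this yields $\vect{p}=0,~e=0$.

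For the error bound I would reduce to a stability estimate. Let $\Pi$ and $P$ denote the standard local $L^2$ projections onto $\vect{P}^k(K)$ and $P^k(K)$, and split $\mathbb{P}^{\vect{U}}\vect{U}-\vect{U}=\sigma+(\Pi\vect{U}-\vect{U})$, $\mathcal{P}^E E-E=\theta+(PE-E)$ with $\sigma:=\mathbb{P}^{\vect{U}}\vect{U}-\Pi\vect{U}\in\mathbb{V}_h$ and $\theta:=\mathcal{P}^E E-PE\in V_h^k$. Since $\vect{U},E$ are continuous their jumps vanish and $\bm{\mathcal{F}_1}(\vect{U},E)=\vect{U}$, $\mathcal{F}_2(E,\vect{U})=E$ on each face, so linearity rewrites \eqref{multipro3}--\eqref{multipro4} as $\int_{\mathscr{F}}\bm{\mathcal{F}_1}(\sigma,\theta)\cdot\mu\vect{n}\,ds=-\int_{\mathscr{F}}\bm{\mathcal{F}_1}(\Pi\vect{U}-\vect{U},PE-E)\cdot\mu\vect{n}\,ds$ and analogously for $\mathcal{F}_2$, while \eqref{multipro1}--\eqref{multipro2} give $\int_K\sigma\cdot\vect{v}\,d\vect{x}=\int_K\theta w\,d\vect{x}=0$ for all $\vect{v}\in\vect{P}^{k-1}(K),~w\in P^{k-1}(K)$ because $\Pi,P$ reproduce those spaces. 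Hence $(\sigma,\theta)\in\mathbb{V}_h\times V_h^k$ solves the same projection operator, now driven only by face data equal to the flux of the known local-projection errors.

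The quantitative counterpart of the injectivity argument is the stability bound $\norm{\sigma}+\norm{\theta}\le C$ times the size of that face data, with $C$ independent of $h$; this I would obtain by running the energy identity with nonzero right-hand side and controlling the boundary couplings through an inverse trace inequality $\norm{w}_{L^2(\mathscr{F})}\le Ch^{-1/2}\norm{w}_{L^2(K)}$ for discrete functions, together with the optimal face approximation estimate $\norm{\Pi\vect{U}-\vect{U}}_{L^2(\mathscr{F})}+\norm{PE-E}_{L^2(\mathscr{F})}\le Ch^{k+1/2}\big(\norm{\vect{U}}_{H^{k+1}}+\norm{E}_{H^{k+1}}\big)$. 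This gives $\norm{\sigma}+\norm{\theta}\le Ch^{k+1}\big(\norm{\vect{U}}_{H^{k+1}}+\norm{E}_{H^{k+1}}\big)$, and a triangle inequality with the standard volume estimate $\norm{\Pi\vect{U}-\vect{U}}+\norm{PE-E}\le Ch^{k+1}\big(\norm{\vect{U}}_{H^{k+1}}+\norm{E}_{H^{k+1}}\big)$ yields the claim after squaring. The main obstacle is precisely the stability/injectivity step on unstructured triangular meshes: unlike the one-dimensional situation of Lemma \ref{multiprojectionerror}, the face conditions couple every cell globally and the jump terms cannot be localized, so the argument depends essentially on the geometric hypothesis $|\vect{r}\cdot\vect{n}_\mathscr{F}|\ge\kappa$ and the nondegeneracy $|\bm{\alpha}|^2+\beta_1\beta_2\ne0$ to make the operator boundedly invertible uniformly in $h$; the trace and approximation estimates are then routine.
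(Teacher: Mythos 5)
First, note that the paper does not prove this lemma at all: it is imported verbatim from \cite[Lemma 3.1]{SX2021}, so your reconstruction must be judged against what a complete argument requires rather than against an in-paper proof. Your overall architecture is the right shape (a square system plus injectivity for well-posedness; the error bound via an $h$-uniform stability estimate applied to the face data generated by the local $L^2$-projection errors), and the dimension count, the $h^{k+1/2}$ trace estimates, and the concluding triangle inequality are all fine. The genuine gap is the injectivity step itself. You claim that feeding the homogeneous face conditions into the energy identity of Theorem \ref{multitrieng}, together with $|\bm{\alpha}|^2+\beta_1\beta_2\ne0$, ``forces every jump of $e$ and of $\vect{p}\cdot\vect{n}$ to vanish.'' This cannot work, because the energy identity is blind to $\bm{\alpha}$. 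Writing $(\vect{p},e)$ for the homogeneous solution, the interior conditions \eqref{multipro1}--\eqref{multipro2} give $\sum_{K}\int_{\partial K}e\,\vect{p}\cdot\vect{n}\,ds=0$; rewriting this via \eqref{multiintprop} and substituting the homogeneous versions of \eqref{multipro4} (tested with $\nu=[\vect{p}\cdot\vect{n}]$) and \eqref{multipro3} (tested with $\mu=e^+-e^-$, so $\mu\vect{n}=[e\vect{n}]$), the two $\bm{\alpha}$-contributions are $-(\bm{\alpha}\cdot[e\vect{n}])[\vect{p}\cdot\vect{n}]$ and $+(\bm{\alpha}\cdot[e\vect{n}])[\vect{p}\cdot\vect{n}]$, which cancel identically --- exactly the cancellation in \eqref{multianaener} that makes the scheme energy-stable for \emph{every} $\alpha$. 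What remains is
\[
0=\sum_{\mathscr{F}\in\Gamma}\int_{\mathscr{F}}\beta_1\,|[e\vect{n}]|^2+\beta_2\,[\vect{p}\cdot\vect{n}]^2\,ds.
\]
The lemma's hypothesis allows $\beta_1=0$ (with $\bm{\alpha}\ne\vect{0}$), and in that case this identity yields $[\vect{p}\cdot\vect{n}]=0$ but no control whatsoever on the jumps of $e$; your claimed conclusion fails precisely in a regime the lemma is designed to cover. The nondegeneracy condition has to enter through a different mechanism: once $[\vect{p}\cdot\vect{n}]=0$, condition \eqref{multipro4} becomes the pointwise trace relation $\{e\}+\bm{\alpha}\cdot[e\vect{n}]=0$ on every face, and one must propagate this one-sided relation cell by cell by ordering the triangles along the direction $\vect{r}$. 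That global sweeping/induction is the true role of the mesh hypothesis $|\vect{r}\cdot\vect{n}_\mathscr{F}|\ge\kappa>0$ --- not, as you suggest, merely fixing $\bm{\alpha}$ as a transverse vector.

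The same defect undermines your quantitative half. You assert that the $h$-uniform stability bound on $\norm{\sigma}+\norm{\theta}$ follows from ``running the energy identity with nonzero right-hand side'' plus inverse trace inequalities, but the energy identity only controls the $\beta_1,\beta_2$-weighted jump seminorms (degenerate when $\beta_1=0$), and jump control together with vanishing interior moments does not convert into volume $L^2$ control by any local inequality: the face conditions couple all cells globally, so injectivity of the square system on each fixed mesh does not by itself give a bound on the inverse that is uniform as $h\to 0$. Establishing that uniform bound is the actual content of \cite[Lemma 3.1]{SX2021}, and it is the one step your proposal leaves unproved --- as your own closing sentence concedes, the ``main obstacle'' is identified but deferred rather than resolved.
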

\begin{theorem}[\textbf{Optimal error estimate}]\label{multitrierror}
     Suppose $\beta_1\ge0,~\beta_2>0$, and $|\bm{\alpha}|^2+\beta_1\beta_2\ne0$.
Let $E_h$ and $\vect{U}_h$ be the numerical solutions obtained by semi-discrete DG method \eqref{multitridg1}-\eqref{multitridg2}, 
and $E, \vect{U} \in L^2(\Omega\times[0,T];H^{k+2})\cap L^4(\Omega\times [0,T];\mathbb{R})$ are strong solutions, and $F, \vect{G}\in L^2(\Omega\times[0,T],H^{k+1})$, then there exists some constant $C$ such that 
  \[\mathbb{E}\Big(\norm{E-E_h}^2+\norm{\vect{U}-\vect{U}_h}^2\Big)\le Ch^{2k+2}.\]
\end{theorem}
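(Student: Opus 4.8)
The plan is to mirror the one-dimensional argument of Theorem \ref{multierror}, replacing the scalar projection pair by $\mathbb{P}^{\vect{U}}$ and $\mathcal{P}^E$ of Lemma \ref{multiSXlemma} and the one-dimensional integration by parts by the face identity \eqref{multiintprop}. Since the strong solutions $E,\vect{U}$ are regular enough to satisfy the weak formulation \eqref{multitridg1}-\eqref{multitridg2}, subtracting the scheme produces error equations for $E-E_h$ and $\vect{U}-\vect{U}_h$. I would decompose the errors as $E-E_h=\xi^E-\epsilon^E$ and $\vect{U}-\vect{U}_h=\bm{\xi}^{\vect{U}}-\bm{\epsilon}^{\vect{U}}$, where $\xi^E=\mathcal{P}^E E-E_h\in V_h^k$, $\epsilon^E=\mathcal{P}^E E-E$, $\bm{\xi}^{\vect{U}}=\mathbb{P}^{\vect{U}}\vect{U}-\vect{U}_h\in\mathbb{V}_h^k$ and $\bm{\epsilon}^{\vect{U}}=\mathbb{P}^{\vect{U}}\vect{U}-\vect{U}$, and take the initial data $E_h(\cdot,0)=\mathcal{P}^E E(\cdot,0)$, $\vect{U}_h(\cdot,0)=\mathbb{P}^{\vect{U}}\vect{U}(\cdot,0)$ so that $\xi^E(\cdot,0)=\bm{\xi}^{\vect{U}}(\cdot,0)=0$.

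Next I would take $\varphi=\xi^E$ and $\bm{\psi}=\bm{\xi}^{\vect{U}}$ in the error equations and sum over all cells $K$. The defining relations \eqref{multipro1}-\eqref{multipro4} of the projections are tailored precisely so that every volume and face contribution of $\epsilon^E$ and $\bm{\epsilon}^{\vect{U}}$ paired against the $\xi$-derivatives and numerical fluxes vanishes; this is the two-dimensional counterpart of the cancellation recorded below \eqref{multierr3} in the one-dimensional proof. The remaining purely discrete spatial terms are then treated exactly as in the semi-discrete energy law \eqref{multianaener}: integration by parts with \eqref{multiintprop} collapses them to the non-positive penalty contribution $-\sum_{\mathscr{F}\in\Gamma}\int_F\big(\beta_1|[\xi^E\vect{n}]|^2+\beta_2[\bm{\xi}^{\vect{U}}\cdot\vect{n}]^2\big)\,ds\,dt\le 0$, so that the drift of $\int_\Omega\xi^E\,d\xi^E+\bm{\xi}^{\vect{U}}\cdot d\bm{\xi}^{\vect{U}}\,d\vect{x}$ is controlled by $\int_\Omega(d\epsilon^E)\xi^E+(d\bm{\epsilon}^{\vect{U}})\cdot\bm{\xi}^{\vect{U}}\,d\vect{x}$ plus mean-zero It\^o integrals.

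Applying It\^o's formula to $(\xi^E)^2$ and $|\bm{\xi}^{\vect{U}}|^2$ introduces the quadratic variations $\langle\xi^E,\xi^E\rangle_t$ and $\langle\bm{\xi}^{\vect{U}},\bm{\xi}^{\vect{U}}\rangle_t$. As in \eqref{multidxiu}-\eqref{multidxieta}, I would expand $\xi^E$ in a local Legendre basis, use the martingale representation furnished by the error equation, split the stochastic integrand into the projection part $\mathcal{P}^E F-F$ and the Lipschitz-controllable difference $F(\cdot,E,\vect{U})-F(\cdot,E_h,\vect{U}_h)$, and invoke It\^o isometry together with the bound \eqref{multi2dassum1} to obtain $\int_\Omega\mathbb{E}\langle\xi^E,\xi^E\rangle_t\,d\vect{x}\le C\int_0^t\mathbb{E}\big(\norm{\mathcal{P}^E F-F}^2+\norm{\xi^E-\epsilon^E}^2+\norm{\bm{\xi}^{\vect{U}}-\bm{\epsilon}^{\vect{U}}}^2\big)\,d\tau$, and analogously for $\bm{\xi}^{\vect{U}}$ with $\vect{G}$. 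The drift term $\mathbb{E}\int_0^t\int_\Omega(d\epsilon^E)\xi^E+(d\bm{\epsilon}^{\vect{U}})\cdot\bm{\xi}^{\vect{U}}\,d\vect{x}$ is handled as in \eqref{multidepsilones1}-\eqref{multidepsilones2}: only its $dt$ part survives the expectation, and it is bounded by $Ch^{2k+2}+\int_0^t\mathbb{E}(\norm{\xi^E}^2+\norm{\bm{\xi}^{\vect{U}}}^2)\,d\tau$ through Lemma \ref{multiSXlemma}.

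Collecting these estimates gives $\mathbb{E}(\norm{\xi^E(\vect{x},t)}^2+\norm{\bm{\xi}^{\vect{U}}(\vect{x},t)}^2)\le C\int_0^t\mathbb{E}(\norm{\xi^E}^2+\norm{\bm{\xi}^{\vect{U}}}^2)\,d\tau+Ch^{2k+2}$, so Gronwall's inequality closes the bound for $\xi^E$ and $\bm{\xi}^{\vect{U}}$; the triangle inequality together with the projection error of Lemma \ref{multiSXlemma} then yields the claimed $Ch^{2k+2}$ estimate. The main obstacle I anticipate is algebraic rather than probabilistic: one must verify that the vector-valued fluxes \eqref{multiflux2} with $\bm{\alpha}=\alpha\,\sign(\vect{r}\cdot\vect{n}_\mathscr{F})\,\vect{n}_\mathscr{F}$ produce, face by face, the same clean cancellation exhibited in \eqref{multianaener}, and that the projection relations \eqref{multipro3}-\eqref{multipro4} indeed annihilate every $\epsilon$-face term despite the sign factor and the orientation-dependence of $\vect{n}_\mathscr{F}$. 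Once this face accounting is confirmed, the stochastic estimates carry over essentially verbatim from the one-dimensional proof.
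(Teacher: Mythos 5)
Your proposal is correct and follows essentially the same path as the paper's own proof: the same $\xi$/$\epsilon$ decomposition via $\mathcal{P}^E$ and $\mathbb{P}^{\vect{U}}$ with projection-based initial data, the same cancellation $\mathcal{A}_K(\epsilon^E,\epsilon^{\vect{U}};\xi^E,\xi^{\vect{U}})=0$ from \eqref{multipro1}--\eqref{multipro4}, the sign-definite penalty terms from the \eqref{multianaener}-type computation, the quadratic-variation bounds carried over from the one-dimensional steps \eqref{multianafopro1}--\eqref{multidxiu} and \eqref{multidepsilones1}--\eqref{multidepsilones2}, and the Gronwall finish. The face-accounting issue you flag at the end is precisely what the paper delegates to the earlier energy analysis and to \cite{SX2021}, so it is handled rather than a gap.
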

\begin{proof}
  For simplicity, for $f,~\varphi\in V_h^k$, and $\vect{g},~\bm{\psi}\in \mathbb{V}_h^k$, introduce the notation
  \begin{eqnarray*}
    &&\mathcal{A}_K(f,\vect{g};\varphi,\bm{\psi})=\int_K\vect{g}\cdot\nabla\varphi+f\nabla\cdot\bm{\psi}
     d\vect{x}dt-
    \int_{\partial K} 
    \bm{\mathcal{F}_1}(\vect{g},f)\cdot\varphi\vect{n}+\mathcal{F}_2(f,\vect{g})\bm{\psi}\cdot\vect{n}dsdt.
  \end{eqnarray*}
  We further define 
  \[\xi^\vect{U}=\mathbb{P}^{\vect{U}}\vect{U}-\vect{U}_h,\quad 
  \xi^E=\mathcal{P}^EE-E_h, \quad  
  \epsilon^{\vect{U}}=\mathbb{P}^{\vect{U}}\vect{U}-\vect{U}, \quad
  \epsilon^E=\mathcal{P}^EE-E, \]
and choose the initial condition
\[\vect{U}_h(x,0)=\mathbb{P}^{\vect{U}}\vect{U}(x,0), \qquad E_h(x,0)=\mathcal{P}^EE(x,0), \]
hence $ \xi^E(\vect{x},0)=0,~ \xi^\vect{U}(\vect{x},0)=\vect{0}$.

It can be observed that the following error equations hold
\begin{align*}
  &\int_K d(\xi^E-\epsilon^E)\varphi d\vect{x}+\int_K (\xi^\vect{U}-\epsilon^{\vect{U}})\cdot\nabla\varphi d\vect{x}dt
  -\int_{\partial K}\bm{\mathcal{F}_1}(\xi^\vect{U}-\epsilon^{\vect{U}},\xi^E-\epsilon^E)\cdot \varphi\vect{n}dsdt \\
  & \hskip3cm
  =\int_K (F(E,\vect{U})-F(E_h,\vect{U}_h))\varphi d\vect{x}dW_t, \\
  &\int_{K}d(\xi^\vect{U}-\epsilon^{\vect{U}})\cdot 
  \bm{\psi}d\vect{x}+\int_{K}(\xi^E-\epsilon^E)\nabla\cdot\bm{\psi}d\vect{x}dt
  -\int_{\partial K}\mathcal{F}_2(\xi^E-\epsilon^E,\xi^\vect{U}-\epsilon^{\vect{U}})\bm{\psi}\cdot\vect{n}dsdt\\
  & \hskip3cm
  =\int_K (\vect{G}(E,\vect{U})-\vect{G}(E_h,\vect{U}_h))\cdot\bm{\psi}d\vect{x}dW_t,
\end{align*}
where we dropped the dependence of $F$, $\vect{G}$ on $\vect{x},t$ for simplicity. 
Taking the test functions $\varphi=\xi^E$ and $\bm{\psi}=\xi^{\vect{U}}$, and summing up two equations, we obtain
  \begin{eqnarray}
    \begin{split}
          \int_K d\xi^E\xi^E+d\xi^{\vect{U}}\cdot \xi^{\vect{U}}d\vect{x}-\int_K d\epsilon^E\xi^E+d\epsilon^{\vect{U}}\cdot\xi^{\vect{U}}d\vect{x}
    +\mathcal{A}_K(\xi^E,\xi^{\vect{U}};\xi^E,\xi^{\vect{U}})-\mathcal{A}_K(\epsilon^E,\epsilon^{\vect{U}};\xi^E,\xi^{\vect{U}})\\   
    =\int_K (F(E,\vect{U})-F(E_h,\vect{U}_h))\xi^E+(\vect{G}(E,\vect{U})-\vect{G}(E_h,\vect{U}_h))\cdot\xi^{\vect{U}}d\vect{x}dW_t,
    \end{split}\label{multianaerr}
  \end{eqnarray}
From our definition of projections \eqref{multipro1}-\eqref{multipro4}, we can conclude that $\mathcal{A}_K(\epsilon^E,\epsilon^{\vect{U}};\xi^E,\xi^{\vect{U}})=0$.
Following the exact same analysis as in \eqref{multianaener}, we can have 
  \[\sum_{K\in\mathcal{T}_h}\mathcal{A}_K(\xi^E,\xi^{\vect{U}};\xi^E,\xi^{\vect{U}})=
  \sum_{\mathscr{F}\in\Gamma}\int_{F}\beta_1^2|[\xi^E\vect{n}]|^2+\beta_2[\xi^{\vect{U}}\cdot\vect{n}]^2dsdt\ge0.\]
In addition, the term $M_1:=\int_K(F(E,\vect{U})-F(E_h,\vect{U}_h))\xi^E+(\vect{G}(E,\vect{U})-\vect{G}(E_h,\vect{U}_h))\cdot\xi^{\vect{U}}d\vect{x}dW_t$ on the right-hand side is an martingale, and its expectation equals to zero.   
Therefore, summing the equation \eqref{multianaerr} over all the cells $K$ leads to
  \begin{eqnarray}
    \int_\Omega d\xi^E\xi^E+d\xi^{\vect{U}}\cdot \xi^{\vect{U}}d\vect{x}\le\int_\Omega d\epsilon^E\xi^E+d\epsilon^{\vect{U}}\cdot
    \xi^{\vect{U}}d\vect{x} + M_1.\label{multitriana3}
  \end{eqnarray}
Following the same derivation of Eqs. \eqref{multidepsilones1}-\eqref{multidepsilones2}, we have 
\begin{eqnarray}
  \mathbb{E}\Big(\int_\Omega \int_0^t d\epsilon^E\xi^E+d\epsilon^{\vect{U}}\cdot
    \xi^{\vect{U}}d\vect{x}\Big)\le Ch^{2k+2}+\int_0^t \mathbb{E}\big(\norm{\xi^E({\vect{x}},\tau)}^2+\norm{\xi^{\vect{U}}({\vect{x}},\tau)}^2\big)d\tau.
\end{eqnarray}
  By It\^{o} lemma, we have
  \[d\xi^E\xi^E=\frac12(d(\xi^E)^2-d\langle \xi^E,\xi^E\rangle_t),~~ d\xi^{\vect{U}}\cdot \xi^{\vect{U}}=\frac12(d|\xi^\vect{U}|^2-d\langle \xi^{\vect{U}}, \xi^{\vect{U}}\rangle_t).\]
  Follow the same process as the steps in \eqref{multianafopro1}-\eqref{multidxiu} in one-dimensional case, we have 
  \begin{eqnarray}
   \begin{split}
      \int_K\mathbb{E}\big(\langle \xi^E,\xi^E\rangle_t+\langle \xi^{\vect{U}}, \xi^{\vect{U}}\rangle_t\big) d\vect{x}
    &\le C\int_K \int_0^t
    \mathbb{E}\big(|\mathcal{P}^EF-F|^2+|\mathbb{P}^{\vect{U}}\vect{G}-\vect{G}|^2\big)d\tau d\vect{x}\\
    &+C
    \int_K\int_0^t \mathbb{E}\big(|\xi^E-\epsilon^E|^2+|\xi^\vect{U}-\epsilon^{\vect{U}}|^2\big)d\tau d\vect{x}.\label{multitriana4}
   \end{split}
  \end{eqnarray}
Summing over all the cells $K$, combining these results with \eqref{multitriana3}, integrating over 
time $t$, and taking expectation, we will have 
\begin{equation*}
  \mathbb{E}\Big(\norm{\xi^E(\vect{x},t)}^2+\norm{\xi^\vect{U}(\vect{x},t)}^2\Big)
  \le \norm{\xi^E(\vect{x},0)}^2+\norm{\xi^\vect{U}(\vect{x},0)}^2+
  C\int^t_0 \mathbb{E}\Big(\norm{\xi^E(\vect{x},\tau)}^2+\norm{\xi^\vect{U}(\vect{x},\tau)}^2\Big)
  d\tau+Ch^{2k+2},
\end{equation*}
after utilizing the optimal projection error in Lemma \ref{multiSXlemma}. 
Note that the chosen initial condition satisfies $\xi^E(x,0)=\xi^\vect{U}(x,0)=0$, we have 
  \[\mathbb{E}\Big(\norm{E-E_h}^2+\norm{\vect{U}-\vect{U}_h}^2\Big)\le Ch^{2k+2},\]
after applying the Gronwall's inequality and the optimal projection error. 
\end{proof}

%%%%%%%%%%%%%%%%%%%%%%%%%%%%%%%%%%%%%%%%%%%%%%%%%%%%%%%%%%%%%%%%%%
\subsection{Rectangular meshes}
In this subsection, we will investigate DG methods on cartesian meshes, and study the stability and error estimate of the proposed methods. 
The stability result and its proof is similar to the case of triangular meshes, but different technique is needed for the proof of optimal error estimate, 
which will be discussed in details. 

The two-dimensional rectangular computational domain $\Omega$ is set to be $I\times J$, and we consider the rectangular partition with the cells denoted by $I_i\times J_j=[x_{i-\frac12},x_{i+\frac12}]\times [y_{j-\frac12},y_{j+\frac12}]$ for $i=1,2,\cdots, N_x$ and $j=1,2\cdots, N_y$. 
Let $x_i=\frac12(x_{i-\frac12}+x_{i+\frac12})$, and $y_j=\frac12(y_{j-\frac12}+y_{j+\frac12})$ . Furthermore, we define the mesh size 
in both directions as $h_{x,i}=x_{i+\frac12}-x_{i-\frac12}$, $h_{y,j}=y_{j+\frac12}-y_{j-\frac12}$, with $h_x=\max_i h_{x,i}$, $h_y=\max_j 
h_{y,j}$ and $h=\max(h_x,h_y)$ being the maximum mesh size. Similar to the 
one-dimensional case, we define the two dimensional piecewise polynomial space $\mathbb{V}_h^k$ as follows: 
 \[\mathbb{V}_h^k=\{w(x,y): w|_{I_i\times J_j}\in Q^k(I_i\times J_j)=P^k(I_i)\otimes P^k(J_j),~~i=1,2,\cdots,N_x; j=1,2,\cdots,N_y\}.\]
 
The DG scheme for \eqref{multi2d} is as follows: find $E_h,~S_h,~T_h\in 
\mathbb{V}_h^k$, such that for all $\varphi,~\psi,~\phi\in\mathbb{V}_h^k$, 
 \begin{eqnarray}
   \int_{J_j}\int_{I_i}dE_h\varphi dxdy&=&-\int_{J_j}\mathcal{A}_{I_i}(T_h,\varphi;\alpha_1)
   dydt 
   +\int_{I_i}\mathcal{A}_{J_j}(S_h,\varphi;-\alpha_2)dxdt \notag \\
   && +\int_{J_j}\int_{I_i}f\varphi dxdydW_t, \label{multi2ddgg1}\\
   \int_{J_j}\int_{I_i}dS_h\psi dxdy &=&\int_{I_i}\mathcal{A}_{J_j}(E_h,\psi;\alpha_2)dxdt
        +\int_{J_j}\int_{I_i}
     g\psi dxdydW_t , \label{multi2ddgg2}\\
     \int_{J_j}\int_{I_i} dT_h \phi dxdy
     &=& -\int_{J_j}\mathcal{A}_{I_i}(E_h,\phi;-\alpha_1)dydt
     +\int_{J_j}\int_{I_i}r
     \phi dxdydW_t, \label{multi2ddgg3} 
 \end{eqnarray} 
where, for simplicity, the following operators are defined: for $\alpha\in\mathbb{R}$, 
$p,~q\in\mathbb{V}_h^k$, 
\[\mathcal{A}_{I_i}(p,q;\alpha)=\int_{I_i}pq_xdx-(\widehat{p_\alpha}q^-)_{i+\frac12,y}+(\widehat{p_\alpha}q^+)_{i-\frac12,y},\]
 \[\mathcal{A}_{J_j}(p,q;\alpha)=\int_{J_j}pq_ydy-(\widehat{p_\alpha}q^-)_{x,j+\frac12}+(\widehat{p_\alpha}q^+)_{x,j-\frac12}.\]
 with the numerical fluxes defined as follows:
\begin{equation}\label{multiflux3}
\text{for}~ q\in\mathbb{V}_h^k~\text{and}~ \alpha\in\{\pm\alpha_1,\pm\alpha_2\}\subset\mathbb{R}, ~\widehat{q_\alpha}=\{q\}+\alpha[q].
\end{equation}
Note that the numerical fluxes \eqref{multiflux3} can be viewed as a special case of \eqref{multiflux1} with $\beta_1=\beta_2=0$. They are chosen such that the optimal error estimate can be easily analyzed.

We first provide the following semi-discrete energy law satisfied by numerical solutions of the proposed DG methods on rectangular meshes.
The proof is identical to that of Theorem \ref{multitrieng}, and is skipped here.
\begin{theorem}[\bf{Semi-discrete energy law}]\label{multi2denergy}
Let $E_h,~S_h,~T_h\in\mathbb{V}_h^k$ be the numerical solutions of the semi-discrete DG methods \eqref{multi2ddgg1}-\eqref{multi2ddgg3}, 
then we have  
  \begin{align}
    &   \mathbb{E}\Big(\norm{E_h(x,y,t)}^2+\norm{S_h(x,y,t)}^2+\norm{T_h(x,y,t)}^2\Big)
       \label{multi2denergyy}\\
    &\qquad 
    =\norm{E_h(x,y,0)}^2+\norm{S_h(x,y,0)}^2+\norm{T_h(x,y,0)}^2
    +\int_0^t \mathbb{E}(\norm{\mathcal{P}(f)}^2+\norm{\mathcal{P}(g)}^2+\norm{\mathcal{P}(r)}^2)d\tau.  \notag
    \end{align}
\end{theorem}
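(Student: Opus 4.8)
The plan is to mimic the proofs of the one-dimensional energy law (Theorem \ref{multithm1}) and its triangular-mesh counterpart (Theorem \ref{multitrieng}), now in the three-field tensor-product setting. First I would choose the test functions $\varphi=E_h$ in \eqref{multi2ddgg1}, $\psi=S_h$ in \eqref{multi2ddgg2}, and $\phi=T_h$ in \eqref{multi2ddgg3}, add the three identities, and sum over all cells $I_i\times J_j$. On the left this yields $\int_\Omega dE_hE_h+dS_hS_h+dT_hT_h\,dxdy$; on the right appear the stochastic forcing $\int_\Omega (fE_h+gS_h+rT_h)\,dxdy\,dW_t$ together with a collection of deterministic ($dt$) terms assembled from the directional operators $\mathcal{A}_{I_i}$ and $\mathcal{A}_{J_j}$.

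The crucial step, and the one I expect to require the most care, is to show that all of these deterministic terms vanish exactly after summation over cells; this is precisely what produces an equality rather than an inequality. The coupling is strictly directional: $E_h$ pairs with $T_h$ through the $x$-operator $\mathcal{A}_{I_i}$ (with fluxes carrying $\alpha_1$ in \eqref{multi2ddgg1} and $-\alpha_1$ in \eqref{multi2ddgg3}), while $E_h$ pairs with $S_h$ through the $y$-operator $\mathcal{A}_{J_j}$ (with fluxes carrying $-\alpha_2$ in \eqref{multi2ddgg1} and $\alpha_2$ in \eqref{multi2ddgg2}). For each interface I would integrate $\int p\,q_x+q\,p_x$ by parts, producing the jump $[pq]$ of the product, and then use the elementary identity $[pq]=\{p\}[q]+\{q\}[p]$ together with the flux definition $\widehat{q_\alpha}=\{q\}+\alpha[q]$ from \eqref{multiflux3}. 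The opposite-sign choice of $\pm\alpha_1$ (and $\pm\alpha_2$) on the two paired equations is exactly what cancels the average terms and annihilates the remaining $\alpha$-proportional cross terms, so that every interior face contributes zero under the periodic boundary condition, in complete analogy with \eqref{multianaener}. Because the fluxes \eqref{multiflux3} contain no penalty parameter, no nonnegative jump terms are left behind, and the entire spatial contribution is identically zero.

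Once the spatial terms are removed, the rest of the argument is routine. I would apply It\^o's lemma to write $dE_hE_h=\tfrac12\big(d(E_h)^2-d\langle E_h,E_h\rangle_t\big)$, and likewise for $S_h$ and $T_h$. The quadratic variations are then evaluated exactly as in the derivation of \eqref{multiquadr}--\eqref{multimaeta}: expanding each unknown in the orthogonal Legendre basis and using that only the martingale part contributes to the bracket shows that $\int_\Omega\langle E_h,E_h\rangle_t\,dxdy$ equals $\int_\Omega\big\langle\int_0^t\mathcal{P}(f)\,dW_\tau,\int_0^t\mathcal{P}(f)\,dW_\tau\big\rangle_t\,dxdy$, with the analogous expressions involving $\mathcal{P}(g)$ and $\mathcal{P}(r)$ for $S_h$ and $T_h$. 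Integrating in time, taking expectation, and invoking the It\^o isometry turns these into $\int_0^t\mathbb{E}\,\norm{\mathcal{P}(f)}^2\,d\tau$ and its counterparts. Finally the stochastic integral $\int_0^t\int_\Omega (fE_h+gS_h+rT_h)\,dxdy\,dW_\tau$ is a martingale with zero expectation, so taking expectation of the assembled identity yields the energy law \eqref{multi2denergyy}, holding with equality precisely because no penalty terms are present.
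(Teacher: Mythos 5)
Your proposal is correct and follows essentially the same route the paper intends: the paper skips this proof, stating it is identical to that of Theorem \ref{multitrieng}, which is exactly the strategy you reconstruct (test with $E_h$, $S_h$, $T_h$, cancel the spatial terms, apply It\^{o}'s lemma, evaluate the quadratic variations via the basis expansion of \eqref{multiquadr}--\eqref{multimaeta} and It\^{o} isometry, and drop the martingale term in expectation). You also correctly supply the one detail specific to the rectangular case — that the opposite-sign $\pm\alpha_1$, $\pm\alpha_2$ fluxes in \eqref{multiflux3} make the interface cross terms cancel exactly, with no leftover penalty jumps, which is why the energy law holds with equality rather than inequality.
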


Before we study the error estimate, some preparations on projections are provided. 
Let us define the generalized Radau projection as 
\begin{eqnarray}\label{multi2dproj}
   \mathbb{P}^\alpha_x=\mathcal{P}^{\alpha,0}  _x \otimes \mathcal{P}_y,~\mathbb{P}^\alpha_y=\mathcal{P}_x \otimes \mathcal{P}^{\beta,0}  _y,~
 \mathbb{P}^{\alpha,\beta}= \mathcal{P}^{\alpha,0}  _x \otimes \mathcal{P}^{\beta,0}_y,
\end{eqnarray}
where $\mathcal{P} $ is the standard one-dimensional $L^2$ projection, and $\mathcal{P}^{\alpha,0}_x $, $\mathcal{P}^{\beta,0}_y$ are the one-dimensional generalized Radau projections \eqref{multip1}-\eqref{multip2} defined as follows:
On the cell $I_i, J_j$ and for any function $q(x), r(y)$, the projections $\mathcal{P}^{\alpha,0}_x, \mathcal{P}^{\beta,0}_y$ into the space $V_h^k$ are given by
\begin{eqnarray*} 
\int_{I_i}(\mathcal{P}_x^{\alpha,0} q-q(x))\phi(x)dx=0,~\forall \phi(x)\in P^{k-1}(I_i),~~\text{and} ~
(\{\mathcal{P}_x^{\alpha,0} q\}+\alpha[\mathcal{P}^{\alpha,0} q])_{i+\frac12}=q(x_{i+\frac12}), \\
\int_{J_j}(\mathcal{P}_y^{\beta,0} r-r(y))\psi(y)dy=0,~\forall \psi(y)\in P^{k-1}(J_j),~~\text{and} ~
(\{\mathcal{P}_y^{\beta,0} r\}+\alpha[\mathcal{P}^{\beta,0} r])_{j+\frac12}=r(y_{j+\frac12}).
\end{eqnarray*} 
%\YX{Define the projection here. It will be confusing to refer to \eqref{multip1}-\eqref{multip2}}
The following lemmas are studied in \cite{XM 2016} and will be useful in our error estimate analysis.
\begin{lemma}[\bf{Superconvergence property}]\label{multisuperconvergence}
Let $\mathbb{P}^{\alpha,\beta}$ be the projection defined in \eqref{multi2dproj} with $\alpha,\beta\ne0$. For any function $w(x,y)\in H^{k+1}$, denote $\epsilon=\mathbb{P}^{\alpha,\beta}w-w$. For any $\phi\in \mathbb{V}_h^k$, there exists some constant $C$ such that 
    \[\Bigg|\sum_{i,j}\int_{J_j}\mathcal{A}_{I_i}(\epsilon,\phi,\alpha)dy\Bigg|\le 
    Ch^{k+1}\norm{\phi},~~~
  \Bigg|\sum_{i,j}\int_{I_i}\mathcal{A}_{J_j}(\epsilon,\phi,\beta)dx\Bigg|\le Ch^{k+1}\norm{\phi}.\]
 \end{lemma}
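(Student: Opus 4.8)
The plan is to exploit the tensor-product structure $\mathbb{P}^{\alpha,\beta}=\mathcal{P}^{\alpha,0}_x\otimes\mathcal{P}^{\beta,0}_y$ and reduce everything to two one-dimensional mechanisms. By symmetry it suffices to treat the first estimate. I would split the projection error as
\[
\epsilon=\mathbb{P}^{\alpha,\beta}w-w=\underbrace{\big((\mathcal{P}^{\alpha,0}_x\otimes I)w-w\big)}_{\epsilon^x}+\underbrace{\big(\mathcal{P}^{\alpha,0}_x\otimes(\mathcal{P}^{\beta,0}_y-I)\big)w}_{\widetilde\epsilon},
\]
so that $\epsilon^x$ carries only the $x$-direction projection error, while $\widetilde\epsilon=\mathcal{P}^{\alpha,0}_x\delta^y$ with $\delta^y=(\mathcal{P}^{\beta,0}_y-I)w$ carries the $y$-direction error after an $x$-projection. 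Since $\mathcal{A}_{I_i}$ acts purely in $x$, each piece can be handled at fixed $y$.

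First I would show that the $\epsilon^x$ contribution vanishes identically. For fixed $y$ (away from the finitely many $y$-interfaces) $\epsilon^x(\cdot,y)$ is exactly the one-dimensional generalized Radau projection error of $w(\cdot,y)$. Because $\phi|_{I_i\times J_j}\in P^k(I_i)\otimes P^k(J_j)$ gives $\phi_x(\cdot,y)\in P^{k-1}(I_i)$, the volume term $\int_{I_i}\epsilon^x\phi_x\,dx$ vanishes by the orthogonality relation defining $\mathcal{P}^{\alpha,0}_x$; and the flux-interpolation property $(\{\mathcal{P}^{\alpha,0}_x q\}+\alpha[\mathcal{P}^{\alpha,0}_x q])_{i+1/2}=q(x_{i+1/2})$, together with $\widehat{w_\alpha}=w$ for the continuous function $w$, forces $\widehat{(\epsilon^x)_\alpha}=0$ at every $x_{i\pm1/2}$, killing the two flux terms. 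Hence $\mathcal{A}_{I_i}(\epsilon^x,\phi,\alpha)=0$ pointwise in $y$ and its total contribution is zero.

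Next, for the remainder I would note that the same flux-interpolation argument shows the $x$-projection error $\mathcal{P}^{\alpha,0}_x\delta^y-\delta^y$ is again invisible to $\mathcal{A}_{I_i}$, so $\mathcal{A}_{I_i}(\widetilde\epsilon,\phi,\alpha)=\mathcal{A}_{I_i}(\delta^y,\phi,\alpha)$. Since $\mathcal{P}^{\beta,0}_y$ acts only in $y$, the function $\delta^y(\cdot,y)$ is continuous across every $x$-interface, so $\widehat{(\delta^y)_\alpha}$ equals the common boundary value of $\delta^y$, which exactly cancels the boundary term produced by integrating $\int_{I_i}\delta^y\phi_x\,dx$ by parts; thus cell by cell $\mathcal{A}_{I_i}(\delta^y,\phi,\alpha)=-\int_{I_i}(\delta^y)_x\phi\,dx$. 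Using that $\mathcal{P}^{\beta,0}_y$ commutes with $\partial_x$, this collapses the whole remainder to
\[
\sum_{i,j}\int_{J_j}\mathcal{A}_{I_i}(\widetilde\epsilon,\phi,\alpha)\,dy=-\int_\Omega(\mathcal{P}^{\beta,0}_y-I)w_x\,\phi\,dxdy,
\]
and it remains to bound this single volume pairing by $Ch^{k+1}\norm{\phi}$.

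This last estimate is the heart of the matter and the step I expect to be the main obstacle, because a direct Cauchy--Schwarz bound $\norm{(\mathcal{P}^{\beta,0}_y-I)w_x}\,\norm{\phi}$ only yields $O(h^{k})$ and loses a full power of $h$. The extra order is a genuine superconvergence phenomenon of the generalized Radau projection: on each cell $J_j$ the $L^2$-orthogonality of $\mathcal{P}^{\beta,0}_y$ removes the $P^{k-1}$ part of $\phi$ in $y$, so only the top Legendre mode survives, and the sharp identity for the pairing of the projection error against that mode, reflecting the Radau-polynomial structure of the leading error term, recovers the missing power of $h$. This is precisely the one-dimensional superconvergence estimate established in \cite{XM 2016}, which I would invoke to conclude $\big|\int_\Omega(\mathcal{P}^{\beta,0}_y-I)w_x\,\phi\,dxdy\big|\le Ch^{k+1}\norm{\phi}$. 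Combining the three steps gives the first inequality, and the second one, involving $\mathcal{A}_{J_j}$ and $\beta$ integrated over $I_i$, follows verbatim after interchanging the roles of $x$ and $y$.
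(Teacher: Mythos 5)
First, a remark on the comparison itself: the paper does not actually prove Lemma \ref{multisuperconvergence} — it is quoted from \cite{XM 2016} — so your attempt has to be measured against the argument in that reference and against how the lemma is used in Theorem \ref{multi2derror}. Your reduction is the correct and standard route, and it matches the structure of the cited argument: the splitting $\epsilon=\epsilon^x+\mathcal{P}^{\alpha,0}_x\delta^y$, the exact vanishing of $\mathcal{A}_{I_i}(\epsilon^x,\phi,\alpha)$ (volume orthogonality since $\phi_x(\cdot,y)\in P^{k-1}(I_i)$, plus flux interpolation and continuity of $w$), the invisibility of $(\mathcal{P}^{\alpha,0}_x-I)\delta^y$, and the cellwise collapse of $\mathcal{A}_{I_i}(\delta^y,\phi,\alpha)$ to $-\int_{I_i}(\delta^y)_x\phi\,dx$, hence to $-\int_\Omega(\mathcal{P}^{\beta,0}_y-I)w_x\,\phi\,dxdy$, are all correct.

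The gap is the final estimate, and it is a genuine one. The mechanism you propose — orthogonality kills the $P^{k-1}$ part of $\phi$ in $y$, and pairing against the surviving top Legendre mode recovers the missing power of $h$ — gains nothing. Write $(\mathcal{P}^{\beta,0}_y-I)w_x=(\Pi_y-I)w_x+(\mathcal{P}^{\beta,0}_y-\Pi_y)w_x$, where $\Pi_y$ is the $L^2$ projection in $y$. The first piece is orthogonal to all of $P^k(J_j)$ in $y$ and contributes nothing to the pairing; the second piece is, on each cell, \emph{exactly} a multiple of the top Legendre polynomial $L_k$. So after your orthogonality reduction the pairing is $\sum_j\int(\mathcal{P}^{\beta,0}_y-\Pi_y)w_x\,\phi$, and because this error is aligned with $L_k$, restricting $\phi$ to its top mode loses nothing: taking $\phi$ cellwise proportional to $L_k$ with matching signs saturates Cauchy--Schwarz. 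The term is therefore of size $\norm{(\mathcal{P}^{\beta,0}_y-\Pi_y)w_x}\norm{\phi}$, and that norm is generically of the same order as the projection error of $w_x$, namely $O(h^{\min(k+1,s)})$ when $w_x$ has $H^s$ regularity in $y$. With only $w\in H^{k+1}$ this gives $s=k$ and $O(h^k)$: no Radau-polynomial identity rescues it, because the obstruction is regularity, not orthogonality. The extra power of $h$ in this lemma comes from having $\partial_x\partial_y^{k+1}w\in L^2$ (e.g.\ $w\in H^{k+2}$), in which case your reduced term is bounded by $Ch^{k+1}\norm{w}_{H^{k+2}}\norm{\phi}$ by plain Cauchy--Schwarz together with the one-dimensional optimal projection estimate applied to $w_x$; this is consistent with Theorem \ref{multi2derror}, where the exact solutions are assumed to lie in $H^{k+2}$ (the $H^{k+1}$ in the lemma's statement understates the regularity actually needed, and the constant $C$ depends on $w$). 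Finally, invoking \cite{XM 2016} for precisely this step is circular in the present exercise: that citation is what stands in place of the entire proof, and the cross-term bound is its whole content.
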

 \begin{lemma}[\bf{Projection error}]
Let $\Pi$ be any projection defined in \eqref{multi2dproj} with $\alpha,\beta\ne0$. For any function $w(x,y)\in H^{k+1}$, there exists some constant $C$ such that  \
   \[\norm{\Pi w-w}\le Ch^{k+1}\norm{w}_{H^{k+1}}.\]
 \end{lemma}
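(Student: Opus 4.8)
The plan is to exploit the tensor-product structure of each projection in \eqref{multi2dproj} and reduce the two-dimensional bound to one-dimensional projection errors that are already available. Every projection in the statement factors as $\Pi=\Pi_x\otimes\Pi_y$, where each of $\Pi_x,\Pi_y$ is either the standard one-dimensional $L^2$ projection $\mathcal{P}$ or a one-dimensional generalized Radau projection $\mathcal{P}^{\gamma,0}$ with $\gamma\in\{\alpha,\beta\}$. For the $L^2$ projection the bound $\norm{(\mathcal{P}-I)\varphi}\le Ch^{k+1}\norm{\varphi}_{H^{k+1}}$ is classical, while for the Radau factor it follows from Lemma \ref{multiprojectionerror} with $\beta_1=\beta_2=0$: then $\alpha^2+\beta_1\beta_2=\gamma^2\ne 0$, the projection pair decouples into a single well-defined one-dimensional projection, and the stated estimate gives the same $O(h^{k+1})$ error. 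Thus I may assume each factor $\Pi_x,\Pi_y$ satisfies a one-dimensional $O(h^{k+1})$ error estimate and is $L^2$-stable uniformly in $h$.

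First I would introduce the telescoping decomposition
\begin{equation*}
\Pi w - w = \big(\Pi_x\otimes(\Pi_y-I)\big)w + \big((\Pi_x-I)\otimes I\big)w,
\end{equation*}
and estimate the two terms separately via the triangle inequality. For the second term, Fubini's theorem gives $\norm{((\Pi_x-I)\otimes I)w}^2=\int_J\norm{(\Pi_x-I)w(\cdot,y)}_{L^2(I)}^2\,dy$, and applying the one-dimensional $x$-direction error estimate pointwise in $y$ bounds this by $Ch^{2k+2}\int_J\norm{w(\cdot,y)}_{H^{k+1}(I)}^2\,dy\le Ch^{2k+2}\norm{w}_{H^{k+1}}^2$, since the inner integral involves only $x$-derivatives and is controlled by the full $H^{k+1}$ norm.

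For the first term I would set $g(x,y)=(\Pi_y-I)w(x,y)$ so that $(\Pi_x\otimes(\Pi_y-I))w=(\Pi_x\otimes I)g$. Using the uniform $L^2$-stability of $\Pi_x$ in the $x$-variable together with Fubini yields $\norm{(\Pi_x\otimes I)g}^2\le C\norm{g}^2=C\int_I\norm{(\Pi_y-I)w(x,\cdot)}_{L^2(J)}^2\,dx$, and the one-dimensional $y$-direction error estimate then bounds this by $Ch^{2k+2}\norm{w}_{H^{k+1}}^2$. Combining the two pieces and taking square roots completes the proof, and because the decomposition never uses which specific one-dimensional projection sits in each slot, the same argument applies verbatim to all three projections $\mathbb{P}^\alpha_x,\mathbb{P}^\alpha_y,\mathbb{P}^{\alpha,\beta}$ in \eqref{multi2dproj}.

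I expect the only nonroutine ingredient to be the uniform-in-$h$ $L^2$-stability of the one-dimensional generalized Radau projection $\Pi_x$ invoked in the first term: the $L^2$ projection is trivially stable with norm one, but the Radau projection requires the well-posedness and boundedness established in the one-dimensional theory of \cite{SX2021} that underlies Lemma \ref{multiprojectionerror}. Once this stability is granted, the remaining steps are straightforward applications of Fubini's theorem and the one-dimensional error bounds.
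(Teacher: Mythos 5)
First, a point of reference: the paper does not prove this lemma at all; it is imported verbatim from \cite{XM 2016} (``The following lemmas are studied in \cite{XM 2016}\dots''), so your attempt has to be judged against the literature rather than an in-paper argument. Your tensor-product decomposition is the natural strategy, and your treatment of the term $\bigl((\Pi_x-I)\otimes I\bigr)w$ is correct: Fubini plus the one-dimensional estimate applied slice-wise in $y$ uses only pure $x$-derivatives, so it stays within $\norm{w}_{H^{k+1}}$. The genuine gap is exactly the ingredient you flag and then assume: uniform $L^2$-stability of the one-dimensional generalized Radau projection $\mathcal{P}^{\gamma,0}$. This is not contained in Lemma \ref{multiprojectionerror} (which gives only well-posedness and the $h^{k+1}$ error bound), and in the strict form you need it is \emph{false}. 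The projection is driven by interface point values through $(\{\mathcal{P}^{\gamma,0}q\}+\gamma[\mathcal{P}^{\gamma,0}q])_{i+\frac12}=q(x_{i+\frac12})$, and point values at interfaces are not controlled by the $L^2$ norm: a continuous $q$ that is negligibly small in $L^2$ but equal to $1$ at every interface forces $\mathcal{P}^{\gamma,0}q$ to carry $O(1)$ interface data, hence $\norm{\mathcal{P}^{\gamma,0}q}=O(1)$ while $\norm{q}$ is arbitrarily small; no bound $\norm{\mathcal{P}^{\gamma,0}q}\le C\norm{q}$ with $C$ independent of $h$ and $q$ can hold. Consequently your key step $\norm{(\Pi_x\otimes I)g}\le C\norm{g}$ with $g=(\Pi_y-I)w$ is unjustified.

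The gap is partially repairable. For the two projections that contain an $L^2$ factor, $\mathbb{P}^{\alpha}_x=\mathcal{P}^{\alpha,0}_x\otimes\mathcal{P}_y$ and $\mathbb{P}^{\alpha}_y$, reorder the splitting so that the genuinely stable operator sits outside: $\Pi w-w=(I\otimes\mathcal{P}_y)\bigl((\mathcal{P}^{\alpha,0}_x\otimes I-I)w\bigr)+\bigl(I\otimes\mathcal{P}_y-I\bigr)w$; since the $L^2$ projection has operator norm $1$, both terms are $O(h^{k+1})$ by slice-wise one-dimensional estimates. But this cannot fix $\mathbb{P}^{\alpha,\beta}=\mathcal{P}^{\alpha,0}_x\otimes\mathcal{P}^{\beta,0}_y$, where both factors are global Radau projections and neither is $L^2$-stable. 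There the correct route is the one in \cite{XM 2016}: a stability estimate in a norm that includes interface traces, of the form $\norm{\mathcal{P}^{\gamma,0}q}\le C\bigl(\norm{q}+h^{1/2}\bigl(\sum_i|q(x_{i+\frac12})|^2\bigr)^{1/2}\bigr)$, obtained from uniform invertibility of the global interface system; applying it to $g=(\Pi_y-I)w$ then requires a trace inequality and bounds on $\partial_x g=(I\otimes(\Pi_y-I))\partial_x w$, i.e.\ intermediate-order projection estimates involving mixed derivatives $\partial_x\partial_y^n w$. That analysis is the actual non-routine content of the cited lemma, not a routine Fubini step, so as written your proposal has a genuine hole for the case $\mathbb{P}^{\alpha,\beta}$.
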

Now we turn to the optimal error estimate.  
\begin{theorem} [\bf{Optimal error estimate}]\label{multi2derror}
Let $E_h, S_h, T_h\in\mathbb{V}_h^k$ be the numerical solutions given by the DG scheme \eqref{multi2ddgg1} - \eqref{multi2ddgg3},
and $E, T, S\in L^2(\Omega\times[0,T];H^{k+2})\cap L^4(\Omega\times [0,T];\mathbb{R})$ are exact solutions to \eqref{multi2d}, and $f, g, r\in H^{k+1}$, then there exists some constant $C$ such that 
\begin{equation}\label{multi2derroreq}
\mathbb{E}\Big(\norm{E-E_h}^2+\norm{S-S_h}^2+\norm{T-T_h}^2\Big)\le Ch^{2k+2}.
\end{equation}
\end{theorem}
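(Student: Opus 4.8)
The plan is to follow the same error-splitting framework used in the triangular-mesh case (Theorem \ref{multitrierror}), but exploit the tensor-product structure of the rectangular mesh so that the projection $\mathbb{P}^{\alpha,\beta}$ decouples direction by direction. First I would decompose each error into a projection error and a numerical error, writing
\[
E-E_h=\xi^E-\epsilon^E,\quad S-S_h=\xi^S-\epsilon^S,\quad T-T_h=\xi^T-\epsilon^T,
\]
where the $\xi$-terms lie in $\mathbb{V}_h^k$ and the $\epsilon$-terms are the projection errors. The key choice is which projection to apply to each variable so that the volume terms and flux terms pair up favorably: from the structure of \eqref{multi2ddgg1}-\eqref{multi2ddgg3}, $T_h$ and $E_h$ interact through $\mathcal{A}_{I_i}$ (the $x$-direction operator), while $S_h$ and $E_h$ interact through $\mathcal{A}_{J_j}$ (the $y$-direction operator). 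I would project $E$ with $\mathbb{P}^{\alpha_1,\alpha_2}$, $T$ with a projection matching $-\alpha_1$ in $x$ and the $L^2$ projection in $y$, and $S$ with the $L^2$ projection in $x$ and $\alpha_2$ in $y$, choosing the initial data so that all $\xi(\cdot,0)=0$.

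**The main algebraic step:**

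Next I would take the test functions $\varphi=\xi^E$, $\psi=\xi^S$, $\phi=\xi^T$ in the error equations and sum over all cells. As in the proof of Theorem \ref{multi2denergy}, summation by parts causes the antisymmetric spatial coupling terms involving the $\xi$-quantities to cancel (since the rectangular fluxes \eqref{multiflux3} have $\beta_1=\beta_2=0$, there is no penalty term surviving). The crucial advantage over the triangular case is that the cross-terms involving $\epsilon^E,\epsilon^S,\epsilon^T$ do \emph{not} vanish identically here — the generalized Radau projection only kills the $P^{k-1}$ moments and matches one-sided flux values in a single direction — but the superconvergence property of Lemma \ref{multisuperconvergence} bounds exactly these residual terms $\sum_{i,j}\int \mathcal{A}_{I_i}(\epsilon,\xi,\alpha)$ and $\sum_{i,j}\int \mathcal{A}_{J_j}(\epsilon,\xi,\beta)$ by $Ch^{k+1}\norm{\xi}$. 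After applying Young's inequality, these contribute $Ch^{2k+2}+C\int_0^t \mathbb{E}(\norm{\xi^E}^2+\norm{\xi^S}^2+\norm{\xi^T}^2)d\tau$.

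**Handling the stochastic and quadratic-variation terms:**

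For the It\^o correction, I would invoke the It\^o formula to write $d\xi^E\,\xi^E=\frac12(d(\xi^E)^2-d\langle\xi^E,\xi^E\rangle_t)$ and similarly for $\xi^S,\xi^T$. Repeating the martingale-representation argument from \eqref{multianafopro1}-\eqref{multidxiu}, the quadratic variation of each $\xi$ is controlled by the projection error of the corresponding noise coefficient plus the Lipschitz bound \eqref{multi2dassum1}, giving bounds of the form
\[
\int_\Omega \mathbb{E}\langle\xi^E,\xi^E\rangle_t\,d\vect{x}\le Ch^{2k+2}+C\int_0^t\mathbb{E}\big(\norm{\xi^E}^2+\norm{\xi^S}^2+\norm{\xi^T}^2\big)d\tau,
\]
where the Lipschitz constant converts $|f(E,S,T)-f(E_h,S_h,T_h)|$ into $|\xi-\epsilon|$ terms and the $\epsilon$-parts are absorbed by the projection-error lemma. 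The stochastic integral $\int_0^t\int_\Omega(\cdots)dW_\tau$ is a martingale with zero expectation. Collecting everything and taking expectations yields a Gr\"onwall-ready inequality, and applying Gr\"onwall's inequality together with the projection-error estimate gives \eqref{multi2derroreq}.

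**Anticipated obstacle:**

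The hard part will be the treatment of the cross-direction projection-error terms. Unlike the triangular case, where the specially constructed projection pair \eqref{multipro1}-\eqref{multipro4} makes $\mathcal{A}_K(\epsilon^E,\epsilon^{\vect{U}};\xi^E,\xi^{\vect{U}})$ vanish exactly, here the tensor-product Radau projection annihilates errors only in one spatial direction, so the naive residual is only $O(h^{k+1})$ rather than zero. Obtaining the optimal $h^{2k+2}$ rate therefore hinges entirely on the superconvergence estimate of Lemma \ref{multisuperconvergence}, which upgrades the one-sided $h^{k+1}$ bound to a useful one by pairing it against $\norm{\xi}$ and using Young's inequality — so the care lies in verifying that every surviving $\epsilon$-$\xi$ coupling in the summed error equation is exactly of the form covered by that lemma, and that no uncontrolled boundary contributions remain after summation by parts under periodic boundary conditions.
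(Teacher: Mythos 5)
Your proposal follows the same route as the paper's proof: split the errors using direction-adapted tensor-product projections, test with $\xi^E,\xi^S,\xi^T$, cancel the antisymmetric $\xi$--$\xi$ couplings, control the quadratic variations through the It\^{o}/martingale argument and the Lipschitz assumption, and close with Young's inequality and Gronwall. However, two details as written would break the optimal rate. First, your projection parameters are sign-mismatched with the fluxes. In the error equations, the $T$-error is paired with $\mathcal{A}_{I_i}(\cdot,\cdot;\alpha_1)$, the $S$-error with $\mathcal{A}_{J_j}(\cdot,\cdot;-\alpha_2)$, and the $E$-error with $\mathcal{A}_{I_i}(\cdot,\cdot;-\alpha_1)$ and $\mathcal{A}_{J_j}(\cdot,\cdot;\alpha_2)$; the paper accordingly takes $\mathbb{P}_x^{\alpha_1}T$, $\mathbb{P}_y^{-\alpha_2}S$, and $\mathbb{P}^{-\alpha_1,\alpha_2}E$. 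Your choices ($-\alpha_1$ in $x$ for $T$, $+\alpha_2$ in $y$ for $S$, $\mathbb{P}^{\alpha_1,\alpha_2}$ for $E$) enforce the wrong interface condition, so the flux errors $\{\epsilon\}+\alpha[\epsilon]$ do \emph{not} vanish at cell interfaces; these are only $O(h^{k+1/2})$ in $L^2$ on the edges, which after summation gives an $O(h^{k})\norm{\xi}$-type contribution and hence a suboptimal estimate, and Lemma \ref{multisuperconvergence} cannot rescue them since it is stated only for the fully matched tensor projection $\mathbb{P}^{\alpha,\beta}$ with $\alpha,\beta\neq 0$.

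Second, and relatedly, your division of labor between exact cancellation and superconvergence is inverted relative to what actually happens. With the correctly matched parameters, the couplings $\mathcal{A}_{I_i}(\epsilon^T,\xi^E;\alpha_1)$ and $\mathcal{A}_{J_j}(\epsilon^S,\xi^E;-\alpha_2)$ vanish \emph{identically}: the Radau factor kills the volume moments and the interface values in its own direction, while the $L^2$ factor of $\mathbb{P}_x^{\alpha_1}$ (resp.\ $\mathbb{P}_y^{-\alpha_2}$) annihilates everything against $\mathbb{V}_h^k$ test functions once one integrates in the transverse direction. It is only the couplings $\mathcal{A}_{J_j}(\epsilon^E,\xi^S;\alpha_2)$ and $\mathcal{A}_{I_i}(\epsilon^E,\xi^T;-\alpha_1)$ — where $\epsilon^E$ is the error of the full two-directional Radau projection, whose transverse factor is \emph{not} an $L^2$ projection — that fail to vanish and genuinely require the superconvergence bound $Ch^{k+1}\norm{\xi}$ of Lemma \ref{multisuperconvergence}. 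Once the signs are corrected and the roles of the two mechanisms are assigned as above, your argument coincides with the paper's proof.
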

\begin{proof}
We start by introducing
\begin{align*}
& \xi^E=\mathbb{P}^{-\alpha_1,\alpha_2}E-E_h, \qquad
 \xi^S=\mathbb{P}_y^{-\alpha_2}S-S_h, \qquad
 \xi^T=\mathbb{P}_x^{\alpha_1}T-T_h, \\
 & \epsilon^E=\mathbb{P}^{-\alpha_1,\alpha_2}E-E, \qquad
 \epsilon^S=\mathbb{P}_y^{-\alpha_2}S-S, \qquad 
 \epsilon^T=\mathbb{P}_x^{\alpha_1}T-T, 
 \end{align*}
which leads to 
\[E-E_h=\xi^E-\epsilon^E,\qquad S-S_h=\xi^S-\epsilon^S,\qquad T-T_h=\xi^T-\epsilon^T.\]
We choose the initial conditions as follows:
\[E_h(x,y,0)=\mathbb{P}^{-\alpha_1,\alpha_2}E(x,y,0), \qquad
 S_h(x,y,0)=\mathbb{P}^{-\alpha_2}_yS(x,y,0), \qquad
 T_h(x,y,0)=\mathbb{P}_x^{\alpha_1}T(x,y,0),\]
hence, $\xi^E(x,y,0)=\xi^S(x,y,0)=\xi^T(x,y,0)=0$.

Since both numerical solutions and exact solutions satisfy equations \eqref{multi2ddgg1}-\eqref{multi2ddgg3}, we have the following error equations
\begin{align}
   \int_{J_j}\int_{I_i}d(E-E_h)\varphi dxdy&=-\int_{J_j}\mathcal{A}_{I_i}(T-T_h,\varphi;\alpha_1)
   dydt 
   +\int_{I_i}\mathcal{A}_{J_j}(S-S_h,\varphi;-\alpha_2)dxdt \notag \\
   &~~
   +\int_{J_j}\int_{I_i}(f(x,t,\bm{u})-f(x,t,\bm{u}_h))\varphi dxdydW_t,
  \label{multi2ddgge1}\\
   \int_{J_j}\int_{I_i}d(S-S_h)\psi dxdy 
   &= \int_{I_i}\mathcal{A}_{J_j}(E-E_h,\psi;\alpha_2)dxdt \notag \\
   &~~   
   +\int_{J_j}\int_{I_i}(g(x,t,\bm{u})-g(x,t,\bm{u}_h))\psi dxdydW_t,
     \label{multi2ddgge2}\\
     \int_{J_j}\int_{I_i} d(T-T_h) \phi dxdy
     &= -\int_{J_j}\mathcal{A}_{I_i}(E-E_h,\phi;-\alpha_1)dydt \notag \\
   &~~   
     +\int_{J_j}\int_{I_i}(r(x,t,\bm{u})-r(x,t,\bm{u}_h))\phi dxdydW_t.
     \label{multi2ddgge3}
 \end{align} 
Choose the test functions $\varphi=\xi^E,~~\psi=\xi^S,~~\phi=\xi^T$, and notice that 
\[\int_{J_j}\mathcal{A}_{I_i}(\epsilon^T,\xi^E;\alpha_1)dydt=\int_{I_i}\mathcal{A}_{J_j}(\epsilon^S,\xi^E;-\alpha_2)dxdt=0,\] 
following the definition of the projections. For a function $q\in \{f, g, r\}$ and $w\in\mathbb{V}_h^k$, define 
\[\vect{E}^{i,j}_q(w)=\int_{J_j}\int_{I_i}(q(x,t,\bm{u})-q(x,t,\bm{u}_h))wdxdydW_t, \] 
which is an It\^{o} integral, hence we have $\mathbb{E}\big(\vect{E}^{i,j}_q(w)\big)=0$ for any $w$.
Therefore, equations \eqref{multi2ddgge1}-\eqref{multi2ddgge3} become
\begin{align*}
 &\int_{J_j}\int_{I_i}d\xi^E\xi^Edxdy-\int_{J_j}\int_{I_i}d\epsilon^E\xi^Edxdy
 =-\int_{J_j}\mathcal{A}_{I_i}(\xi^T,\xi^E;\alpha_1)dydt
+\int_{I_i}\mathcal{A}_{J_j}(\xi^S,\xi^E;-\alpha_2) dxdt+\vect{E}^{i,j}_f(\xi^E), \\ %\label{multi2derror1}\\
 & \int_{J_j}\int_{I_i}d\xi^S\xi^Sdxdy-\int_{J_j}\int_{I_i}d\epsilon^S\xi^Sdxdy
  =\int_{I_i}\mathcal{A}_{J_j}(\xi^E,\xi^S;\alpha_2)dxdt
  -\int_{I_i}\mathcal{A}_{J_j}(\epsilon^E,\xi^S;\alpha_2)dxdt+\vect{E}^{i,j}_g(\xi^S), \\ %  \label{multi2derror2}\\
 & \int_{J_j}\int_{I_i}d\xi^T\xi^T dxdy-\int_{J_j}\int_{I_i} d\epsilon^T\xi^T dxdy=
  -\int_{J_j} \mathcal{A}_{I_i}(\xi^E,\xi^T;-\alpha_1)dydt
  +\int_{J_j}\mathcal{A}_{I_i}(\epsilon^E,\xi^T;-\alpha_1)dydt+\vect{E}^{i,j}_r(\xi^T). %\notag \\  \label{multi2derror3}
\end{align*}
Summing up these equations and applying integration by parts, we obtain 
\begin{align}\label{multi2dqqqq}  
    &\int_{J_j}\int_{I_i}d\xi^E\xi^E+d\xi^S\xi^S+d\xi^T\xi^T dxdy~~~~~~~~~~~~~~~~~~~~~~~~~~  \notag \\
    &=    \int_{J_j}\int_{I_i}d\epsilon^E\xi^E+d\epsilon^S\xi^S+d\epsilon^T\xi^T dxdy
    -\int_{J_j}\Bigg(\Pi_{i-\frac12,y}-\Pi_{i+\frac12,y}\Bigg)
    dydt+\int_{I_i}\Bigg(\bar{\Pi}_{x,j-\frac12} -\bar{\Pi}_{x,j+\frac12}\Bigg)dxdt	\notag \\
    &-\int_{I_i}\mathcal{A}_{J_j}(\epsilon^E,\xi^S;\alpha_2)dxdt
  +\int_{J_j}\mathcal{A}_{I_i}(\epsilon^E,\xi^T;-\alpha_1)dydt+\vect{E}^{i,j}_f(\xi^E)+
  \vect{E}^{i,j}_g(\xi^S)+\vect{E}^{i,j}_r(\xi^T),
\end{align}
where 
\[\Pi=\Big(\frac12+\alpha_1\Big)(\xi^T)^+(\xi^E)^-+\Big(\frac12-\alpha_1\Big)(\xi^T)^-(\xi^E)^+,~~ 
\bar{\Pi}=\Big(\frac12+\alpha_2\Big)(\xi^S)^-(\xi^E)^++ \Big(\frac12-\alpha_2\Big)(\xi^S)^+(\xi^E)^-.\]
By It\^{o}'s lemma, we have
\[d(\xi^E)^2=2d\xi^E\xi^E+d\langle \xi^E,\xi^E\rangle_t,~~d(\xi^S)^2=2d\xi^S\xi^S+d\langle \xi^S,\xi^S\rangle_t,~~d(\xi^T)^2=2d\xi^T\xi^T+d\langle \xi^T,\xi^T\rangle_t.\]
Following the same process as in the derivation of \eqref{multidxiu}, we have 
\[\int_{J_j}\int_{I_i}\mathbb{E}\langle \xi^E,\xi^E\rangle _t dxdy\le C\int_{J_j}\int_{I_i}\int_0^t\mathbb{E}(\mathbb{P}^{-\alpha_1,\alpha_2}(f)-f)^2d\tau dxdy+C\mathbb{E}\vect{e}_{i,j},\]
\[\int_{J_j}\int_{I_i}\mathbb{E}\langle \xi^S,\rangle_tdxdy\le C\int_{J_j}\int_{I_i}\int_0^t\mathbb{E}(\mathbb{P}^{-\alpha_2}_y(g)-g)^2d\tau dxdy+C\mathbb{E}\vect{e}_{i,j},\]
\[\int_{J_j}\int_{I_i}\mathbb{E}\langle \xi^T,\xi^T\rangle_t dxdy\le C\int_{J_j}\int_{I_i}\int_0^t\mathbb{E}(\mathbb{P}^{\alpha_1}_x(r)-r)^2d\tau dxdy+C\mathbb{E}\vect{e}_{i,j},\]
where 
\[ \vect{e}_{i,j} := \int_{J_j}\int_{I_i}\int_0^t|\xi^E-\epsilon^E|^2+|\xi^S-\epsilon^S|^2+|\xi^T-\epsilon^T|^2d\tau dxdy. \]
After taking expectation on equation \eqref{multi2dqqqq}, summing over all the cells, and integrating over $t$, we have 
\begin{align*}%\label{multi2dwwwww}
  &\frac12\int_J\int_I \mathbb{E}((\xi^E)^2+ (\xi^S)^2+(\xi^T)^2)dxdy\\
   &=\frac12\int_{J}\int_{I}\mathbb{E}\big(\langle \xi^E,\xi^E\rangle_t+\langle \xi^S,\xi^S\rangle_t+\langle \xi^T,\xi^T\rangle_t\big) dxdy+
   \int_{J}\int_{I}\int_0^t \mathbb{E}(d\xi^E\xi^E+d\xi^S\xi^S+d\xi^T\xi^T) dxdy\\
    &\le\int_0^t\mathbb{E}(\norm{\mathbb{P}^{-\alpha_1,\alpha_2}(f)-f}^2+\norm{\mathbb{P}^{\alpha_1}_x(r)-r}^2+\norm{\mathbb{P}^{-\alpha_2}_y(g)-g}^2)d\tau  \\        
    &
%    +\sum_{i,j}\mathbb{E}\Big(\vect{E}^{i,j}_f(\xi^E)+\vect{E}^{i,j}_g(\xi^S)+\vect{E}^{i,j}_r(\xi^T)\Big)
   +  \int_{J}\int_{I}\int_0^t \mathbb{E}(d\epsilon^E\xi^E+d\epsilon^S\xi^S+d\epsilon^T\xi^T ) dxdy
   + Ch^{k+1}\int_0^t\mathbb{E}\big(\norm{\xi^T} +\norm{\xi^S}\big)d\tau
   + C\sum_{i,j}\mathbb{E}\vect{e}_{i,j}.
\end{align*}
Following the same derivation of Eqs. \eqref{multidepsilones1}-\eqref{multidepsilones2}, we have 
\begin{align*}
   &\int_{J}\int_{I}\int_0^t \mathbb{E}(d\epsilon^E\xi^E+d\epsilon^S\xi^S+d\epsilon^T\xi^T ) dxdy
    \le Ch^{2k+2} \notag \\
   & \hskip2cm
    +\int_0^t \mathbb{E}\Big(\norm{\xi^E(x,y,\tau)}^2+\norm{\xi^S(x,y,\tau)}^2+\norm{\xi^T(x,y,\tau)}^2\Big)d\tau,
\end{align*}
after applying the superconvergence property in Lemma \ref{multisuperconvergence}.
We utilize the projection error property and Young's inequality to obtain 
\[\mathbb{E}\Big(\norm{\xi^E}^2+\norm{\xi^S}^2+\norm{\xi^T}^2\Big)\le C
\int_0^t \mathbb{E}\Big(\norm{\xi^E(x,y,\tau)}^2+\norm{\xi^S(x,y,\tau)}^2+\norm{\xi^T(x,y,\tau)}^2\Big)d\tau+ Ch^{2k+2}.\]
The optimal error estimate \eqref{multi2derroreq} follows from applying the Gronwall's inequality and the optimal projection error.
\end{proof}

\section{Temporal discretization}\label{multitime}
\setcounter{equation}{0}\setcounter{figure}{0}\setcounter{table}{0}

After DG spatial discretization, the semi-discrete DG methods \eqref{multiphi}-\eqref{multiphii}, or \eqref{multitridg1}-\eqref{multitridg2} can be obtained.
To solve the resulting stochastic differential equations, we present Taylor 2.0 strong scheme \cite{YL 2020} as the temporal discretization in this section. 

Let us consider the general matrix-valued stochastic differential equations of the form
\begin{eqnarray}\label{multiSDE}
  \begin{cases}
    d X_t^{i,j} = a^{i,j}(X_t)dt+b^{i,j}(X_t)dW_t, ~~t>0\\
    X_0^{i,j} = x_0^{i,j},
  \end{cases}
\end{eqnarray}
with $i=0,1,\cdots,m$ and $j=0,1,\cdots, M+1$. Suppose that $Y^{i,j}_n$ is a numerical approximation of $X^{i,j}_{t_n}$ at the time $t_n$, and $Y^{i,j}_0=x^{i,j}_0$.  
The recurrent equation between $Y_n^{i,j}$ and $Y_{n+1}^{i,j}$ is derived below. Define the following operators: 
 \[\mathcal{L}^0f=\sum^{M+1}_{j=0}\sum^m_{i=0}a^{i,j}\frac{\partial f}{\partial x_{i,j}}+\frac12 \sum^{M+1}_{l,j=0}\sum^{m}_{m,i=0}b^{i,j}b^{m,l}\frac{\partial^2 f}{\partial x_{i,j}\partial x_{m,l}},\]
 and 
 \[\mathcal{L}^1f=\sum_{j=0}^M\sum_{i=0}^m b^{i,j}\frac{\partial f}{\partial x_{i,j}},\]
where $f: \mathbb{R}^{(m+1)\times(M+2)}\to \mathbb{R}$ is a twice differentiable function. 
As studied in \cite[Section 10.5]{PK 1999} and \cite[Appendix A]{YL 2020}, the following Taylor scheme has 2.0 order of convergence:
 \begin{eqnarray}
   \begin{split}
     Y_{n+1}^{i,j}&=Y_n^{i,j}+a^{i,j}(Y_n)\tau+b^{i,j}(Y_n)\Delta W+\frac12 \mathcal{L}^1 
     b^{i,j}(Y_n)\Big((\Delta W)^2-\tau\Big)\\
     &+\frac12\mathcal{L}^0 a^{i,j}(Y_n)\tau^2+\mathcal{L}^0 b^{i,j}(Y_n)(\Delta W\tau-\Delta Z)
     +\mathcal{L}^1 a^{i,j}(Y_n)\Delta Z\\
     &+\frac16 \mathcal{L}^1\mathcal{L}^1 b^{i,j}(Y_n)\Big((\Delta W)^2-3\tau\Big)\Delta 
     W+\mathcal{L}^1\mathcal{L}^0 b^{i,j}(Y_n)(-\Delta U+\Delta W\Delta Z)\\
     &+\mathcal{L}^1\mathcal{L}^1 a^{i,j}(Y_n)\Big(\frac12 \Delta U-\frac14 \tau^2\Big)
     +\mathcal{L}^0\mathcal{L}^1b^{i,j}(Y_n)\Big(\frac12 \Delta U-\Delta W\Delta Z+\frac12 (\Delta W)^2\tau-\frac14 
     \tau^2\Big)\\
     &+\frac{1}{24}\mathcal{L}^1\mathcal{L}^1\mathcal{L}^1 b^{i,j}(Y_n)\Big((\Delta W)^4-6(\Delta 
     W)^2\tau+3\tau^2\Big),
   \end{split}
 \end{eqnarray}
 where $$\tau=t_{n+1}-t_n, \quad \Delta W=W_{t_{n+1}}-W_{t_n},$$
 and 
 $$\Delta Z=\int_{t_n}^{t_{n+1}}W_s-W_{t_{n}}ds, \quad \Delta U=\int_{t_n}^{t_{n+1}}(W_s-W_{t_n})^2ds.$$
% \YX{How did you compute the derivatives in the $\mathcal{L}^1$ and $\mathcal{L}^0$ terms? [12] used the derivative-free approach.}
 
To numerically compute the stochastic variables $\Delta W$, $\Delta Z$ and $\Delta U$, we define a new process 
 \[v(s)=\frac{W_{t_n+\tau s}-W_{t_n}}{\sqrt{\tau}}, \qquad s\in[0,1], \]
 and have 
 \[\Delta W=\tau^{\frac12}v(1),  ~\Delta Z=\tau^{\frac32}\int_0^1v(s)ds, ~\Delta U=\tau^2\int_0^1v^2(s)ds, \]
 which can be evaluated by solving the following system of equations 
 \begin{eqnarray}\label{multinoiseode}
   \begin{cases}
     dx=dv(s), \qquad x(0)=0,\\
     dy=xds, \qquad  y(0)=0,\\
     dz=x^2ds, \qquad  z(0)=0.
   \end{cases}
 \end{eqnarray}
 at the moment $s=1$.
The system \eqref{multinoiseode} can be solved numerically to obtain the approximation of $\Delta W$, $\Delta Z$ and $\Delta U$. 
We refer to \cite[Appendix A.2.2]{YL 2020} for the details of implementation.  
% \YX{Still need to add some details. It is not clear how $v(s)$, $\int v(s)$ is computed numerically.}
 
\section{Numerical Experiment}\label{multinum}
\setcounter{equation}{0}\setcounter{figure}{0}\setcounter{table}{0}

In this section we present numerical results of the proposed scheme for the one-dimensional and two-dimensional stochastic Maxwell equations with multiplicative noise. We use polynomials of degree $k$ in our proposed DG methods for spatial discretization, and Taylor 2.0 strong scheme for the temporal discretization. The accuracy tests are provided for both 1D case and 2D case to demonstrate the convergence rate. The time history of the discrete energy is also presented for both examples. The Monte-Carlo method is used to compute the stochastic term, and the expectation is computed by averaging over all the samples.

We first consider the following one-dimensional equations
  \begin{equation*}
  \begin{cases}
dv=-u_xdt+v dW_t, &\\
du=-v_xdt+udW_t,
\end{cases}
\end{equation*}
with periodic boundary conditions and the exact solutions given by
\begin{eqnarray}\label{multinumex1}
  \begin{cases}v=(\sin(x-t)+\cos(x+t))e^{W_t-\frac12 t},\\
u=(\sin(x-t)-\cos(x+t))e^{W_t-\frac12 t}.\end{cases}
\end{eqnarray}
The computational domain is set to be $[0,2\pi]$, and the final time $T=0.5$. 
Nx and Nt are used to denote the number of space steps and time steps, and we use uniform meshes as spatial discretization. 
The numerical initial condition is taken as the projection of the exact solutions \eqref{multinumex1} at $t=0$. We apply Monte Carlo 
simulation with 3000 samples to approximate the expectation. Table \ref{multit11} 
and Table \ref{multit12} show the convergence rate when $k=1$ and $k=2$ respectively. 
In both examples $\Delta t$ is chosen small enough to ensure the spatial error dominates, 
and we can observe the optimal error estimate (the expected $(k+1)$-th order of convergence), 
which is consistent with the result in Theorem \ref{multierror} for the semi-discrete DG method. 

\begin{table}[H]
     \caption{Numerical error and convergence rates of 1D case when $k=1$.}
\centering
\begin{tabular}{|c|c|c|c|c|c|}
\hline 
Nx&Nt&$\big(\mathbb{E}\norm{e_u}^2\big)^{1/2}$&rate&$\big(\mathbb{E}\norm{e_v}^2\big)^{1/2}$&rate\\
\hline 
20&200&0.02537&0&7.855E-3&0\\
\hline
40&400&6.407E-3&1.9851&1.978E-3&1.9893\\
\hline
80&800&1.567E-3&2.0313&4.837E-4&2.0320\\
\hline
160&1600&3.913E-4&2.0022&1.206E-4&2.0046\\
\hline

\end{tabular}\label{multit11}
\end{table}
\begin{table}[H]
     \caption{Numerical error and convergence rates of 1D case when $k=2$.}
\centering
\begin{tabular}{|c|c|c|c|c|c|}
\hline 
Nx&Nt&$\big(\mathbb{E}\norm{e_u}^2\big)^{1/2}$&rate&$\big(\mathbb{E}\norm{e_v}^2\big)^{1/2}$&rate\\
\hline 
20&200&6.254E-4&0&2.073E-4&0\\
\hline
40&400&7.970E-5&2.9722&2.383E-5&3.1204\\
\hline
80&800&9.808E-6&3.0225&3.127E-6&2.9302\\
\hline
160&1600&1.269E-6&2.9500&4.071E-7&2.9412\\
\hline

\end{tabular}\label{multit12}
\end{table}

Next we consider the two-dimensional stochastic Maxwell equations
 \begin{equation*}
  \begin{cases}
dE-T_xdt+S_ydt=EdW_t,\\
dS+E_ydt=SdW_t,\\
dT-E_xdt=TdW_t,
\end{cases}
\end{equation*}
with periodic boundary conditions. The exact solutions take the form
\begin{eqnarray}
  \begin{cases}\label{multi2dnumex}
E=\big(\sin(x+t)-\cos(y+t)\big)e^{W_t-\frac12 t},\\
S=\sin(x+t)e^{W_t-\frac12 t},\\
T=\cos(y+t)e^{W_t-\frac12 t}. \end{cases}
\end{eqnarray} 
The space domain is $[0,2\pi]^2$. The numerical initial condition is taken as the projection of the exact solutions \eqref{multi2dnumex} at $t=0$. 
Nx, Ny and Nt are used to denote the number of space cells in $x$, $y$ directions, and the number of time steps, 
and uniform rectangular meshes are considered as spatial discretization. 
We run the simulation until final time $T=0.1$. Monte Carlo simulation with 500 samples are used to approximate the expectation. 
Table \ref{multi2dt1} and Table \ref{multi2dt2} present the convergence rate for the case of $k=1$ and $k=2$ respectively. 
We can observe that in both cases, the optimal convergence rate is achieved.

\begin{table}[H]
     \caption{Numerical error and convergence rates of 2D case when $k=1$. }
\centering
\begin{tabular}{|c|c|c|c|c|c|c|c|c|}
\hline 
Nx&Ny&Nt&$(\mathbb{E}\norm{e_E}^2)^{1/2}$&rate&$(\mathbb{E}\norm{e_S}^2)^{1/2}$&rate&$(\mathbb{E}\norm{e_T}^2)^{1/2}$&rate\\
\hline 
20&20&20&0.02582&0&0.01807&0&0.01807&0\\
\hline
40&40&40&6.690E-3&1.9484&4.674E-3&1.9510&4.674E-3&1.9510\\
\hline
80&80&80&1.648E-3&2.0215&1.160E-3&2.0106&1.160E-3&2.0106\\
\hline
160&160&160&3.984E-4&2.0482&2.810E-4&2.0453&2.810E-4&2.0453\\
\hline
\end{tabular}\label{multi2dt1}
\end{table}

\begin{table}[H]
     \caption{Numerical error and convergence rates of 2D case when $k=2$.}
\centering
\begin{tabular}{|c|c|c|c|c|c|c|c|c|}
\hline 
Nx&Ny&Nt&$(\mathbb{E}\norm{e_E}^2)^{1/2}$&rate&$(\mathbb{E}\norm{e_S}^2)^{1/2}$&rate&$(\mathbb{E}\norm{e_T}^2)^{1/2}$&rate\\
\hline 
20&20&20&8.029E-4&0&5.741E-4&0&5.741E-4&0\\
\hline
40&40&40&1.052E-4&2.9317&7.509E-5&2.9347&7.509E-5&2.9347\\
\hline
80&80&80&1.254E-5&3.0695&9.113E-6&3.0425&9.113E-6&3.0425\\
\hline
160&160&160&1.572E-6&2.9952&1.179E-6&2.9507&1.179E-6&2.9507\\
\hline
\end{tabular}\label{multi2dt2}
\end{table}

The discrete energy law satisfied by the numerical solutions was studied in Theorem \ref{multithm1} for one-dimensional system, and in Theorems \ref{multitrieng} and \ref{multi2denergy} for two-dimensional system. In Figure \ref{multienergy}, the time history of averaged energy is shown for two cases. 

\begin{figure}[h]
\centering
\subfigure{\includegraphics[width=0.35\textwidth]{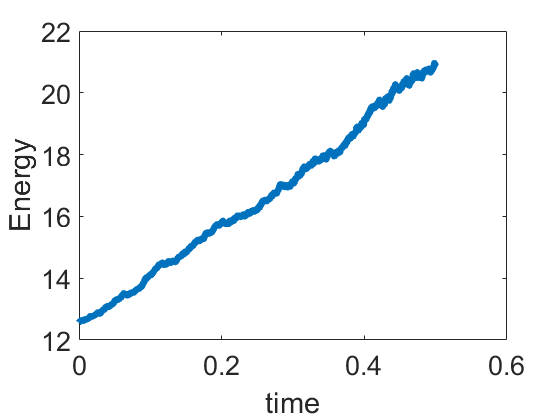}}
\subfigure{\includegraphics[width=0.35\textwidth]{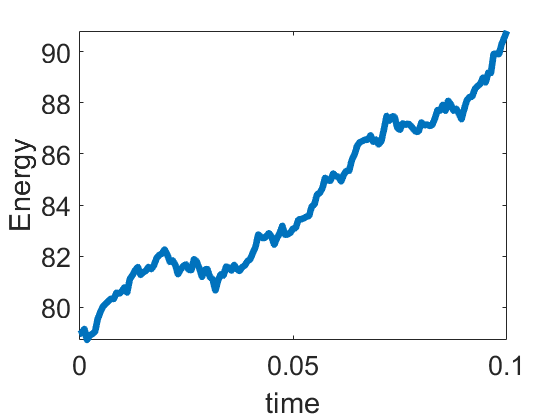}}
\caption{The time history of the averaged energy. Left: 1D result; Right: 2D result.}
\label{multienergy}
\end{figure}

\section{Conclusion Remarks} \label{multisec6}
\setcounter{equation}{0}\setcounter{figure}{0}\setcounter{table}{0}

In this paper we applied high order DG scheme for one- and two-dimensional stochastic Maxwell equations with multiplicative noise. We provide the semi-discrete energy law for both cases. Optimal error estimate of the semi-discrete method is obtained for one-dimensional case, and two-dimensional case on both rectangular meshes and triangular meshes under certain mesh assumptions. The semi-discrete method is combined with strong Taylor 2.0 temporal discretization, and numerical results are presented to validate the optimal error estimates and the growth of energy.

\end{document}